\documentclass[]{interact}
\usepackage{latexsym}
\usepackage{anyfontsize} %the T&F interact.cls file has some non-integer font sizes (like 5.5) so this fixes that warning
\usepackage{tikz} % for diagrams
\usetikzlibrary{cd} %showing a commutative diagram for demonstration purposes; the author should load whatever they need.
\usepackage{color}
\usepackage{hyperref}
\usepackage{longtable}
\usepackage{url}
\usepackage{breakurl}
\newcommand{\bburl}[1]{\textcolor{blue}{\url{#1}}}
\usepackage{mathtools}
\DeclarePairedDelimiter{\set}{\{}{\}}
\usepackage{amssymb}

\usepackage[labelsep=space]{caption}
\usepackage{placeins}

\makeatletter
\newcommand{\monthyear}[1]{%
	\def\@monthyear{\uppercase{#1}}}
\newcommand{\volnumber}[1]{%
	\def\@volnumber{\uppercase{#1}}}
\makeatother

\newcommand{\ben}{\begin{enumerate}}
	\newcommand{\een}{\end{enumerate}}

\theoremstyle{remark}
\theoremstyle{plain}
\numberwithin{equation}{section} %change this to make globally numbered equations
\newtheorem{thm}{Theorem}[section] %remove [section] to make globally numbered environments
\newtheorem{theorem}[thm]{Theorem}
\newtheorem{lemma}[thm]{Lemma}

\newtheorem{definition}[thm]{Definition}

\newtheorem{corollary}[thm]{Corollary}
\newtheorem{defi}{Definition}
\newtheorem{remark}[thm]{Remark}

\numberwithin{table}{section} %change this and the following line to make globally numbered tables and figures
\numberwithin{figure}{section}

\begin{document}

	\monthyear{Month Year}
	\volnumber{Volume, Number}
	\setcounter{page}{1}
	
	\title{Gaps for PLRS with zero coefficients}

	\author{
		\name{Yinxi Chen\textsuperscript{a}, Zongyun Chen\textsuperscript{b}, Brian Hii\textsuperscript{c} and Steven J. Miller\textsuperscript{d}\thanks{
				Email address: \textsuperscript{a}Casey.Chen\_27@tsinglan.org, \textsuperscript{b}marinachenzongyun@gmail.com, \textsuperscript{c}brianhiififi2020@gmail.com and \textsuperscript{d}sjm1@williams.edu}}
		\affil{
			\textsuperscript{a} 8 Tainan Road, Songshan Lake, Dongguan City, Guangdong Province, China \\
			\textsuperscript{b} 193/198, Soi 9, Koolpunt Ville 7, Moo 4, Muang Chiangmai District, Chiangmai Province, Thailand \\
			\textsuperscript{c} 230 Triana Way, San Ramon, CA 94583, United States \\
			\textsuperscript{d} Department of Mathematics, Williams College, Williamstown, MA 01267}
	}
	
	\thanks{We thank the organizing committee of the $22$nd international Fibonacci conference for the opportunity to present this work and gain valuable feedback.}
	
	\maketitle
	
	{\bf Article type}:  mathematical outreach %change commenting to choose the appropriate article type
	%Article type: mathematical outreach
	%Article type: applications to science
	%Article type: Book Review 
	\bigskip
	
	\begin{abstract}
		Zeckendorf proved that every positive integer has a unique decomposition as a sum of non-adjacent Fibonacci numbers. This result extends to Positive Linear Recurrence Sequences (PLRS), defined by $G_{n+1}=c_1G_n+\cdots+c_LG_{n-L+1},$ where the $c_i$ are non-negative integers and $c_1c_L\ge1$. Bower et al. \cite{BILMT} proved that when every $c_i\ge1$, the limiting gap distribution in generalized Zeckendorf decompositions exhibits geometric decay. When zero coefficients are allowed, however, much less is known; previously, only the Kangaroo case had been treated. Related results follow from the probabilistic framework of Ben-Ari and Miller \cite{BM}, but our goal is to adapt the elementary methods of Bower et al. We study the recurrence $H_{n+1}=H_n+H_{n-p}+H_{n-p-q}.$ For sufficiently large gaps, we prove geometric decay with parameter determined by the largest root of the characteristic polynomial. For smaller gaps, assuming $p<q+2$, we prove that limiting probabilities exist and can be computed using a transfer-matrix method. The case $p\ge q+2$ is not addressed by our small-gap argument.
	\end{abstract}

	\begin{keywords}
		Zeckendorf decomposition, Positive Linear Recurrence Sequences, zero coefficients, gaps in the bulk, gap distribution, geometric decay, transfer matrix
	\end{keywords}
	
	%%%%%%%%%%%%%%%%%%%%%%%%%%%%%%%%%%%%%%%%%
	
	\section{Introduction}\label{intro}

	\subsection{Background}
	
	The Fibonacci sequence is the sequence $\{F_n\}_{n\geq 1}$ defined by the initial conditions $F_1 = 1, F_2=1,$ and the recurrence relation $F_{n+1}=F_n+F_{n-1}, \text{ for } n\geq 2.$ Thus, each term after the first two is the sum of the two preceding terms, giving $1,1,2,3,5,8,13,\dots$. In Zeckendorf's theorem, the Fibonacci numbers are  $1,2,3,5,8,13,\dots,$ with the repeated $1$ omitted. This convention ensures that every positive integer has a unique representation as a sum of nonconsecutive Fibonacci numbers. In Theorem \ref{generalzeck}, this result is established for \textbf{Positive Linear Recurrence Sequences (PLRS)}, a broader class of sequences that includes the Fibonacci numbers and satisfies a generalized Zeckendorf theorem.
	
	\begin{defi}
		We say a sequence $\{H_n\}_{n=1}^\infty$ of positive integers is a \textbf{Positive Linear Recurrence Sequence (PLRS)} if the following properties hold.
		
		\ben
		
		\item \emph{Recurrence relation:} There are non-negative integers $L, c_1, \dots, c_L$ such that
		\begin{equation}
			H_{n+1} \ = \ c_1 H_n + \dots + c_L H_{n+1-L},
		\end{equation}
		with $L, c_1$ and $c_L$ positive.
		
		\item \emph{Initial conditions:} $H_1 = 1$, and for $1 \le n < L$ we have
		\begin{equation}
			H_{n+1} \ = \
			c_1 H_n + c_2 H_{n-1} + \dots + c_n H_{1}+1.
		\end{equation}
		
		\een
		
		We call a decomposition $\sum_{i=1}^{m} {a_i H_{m+1-i}}$ of a positive integer $N$ (and the sequence $\{a_i\}_{i=1}^{m}$) \textbf{legal} if $a_1>0$, the other $a_i \ge 0$, and one of the following two conditions holds.
		
		\begin{itemize}
			
			\item We have $m<L$ and $a_i=c_i$ for $1\le i\le m$.
			
			\item There exists $s\in\{0, \dots, L\}$ such that
			\begin{equation}\label{scon}
				a_1 \ = \ c_1,\ \ \ a_2 \ = \ c_2,\ \ \ \dots,\ \ \ a_{s-1} \ = \ c_{s-1}\ \ \ {\rm{and}}\  \ \ a_s \ < \ c_s,
			\end{equation} $a_{s+1}, \dots, a_{s+\ell} = 0$ for some $\ell \ge 0$,
			and $\{b_i\}_{i=1}^{m-s-\ell}$ (with $b_i = a_{s+\ell+i}$) is legal.
		\end{itemize}
		
		If $\sum_{i=1}^{m} {a_i H_{m+1-i}}$ is a legal decomposition of $N$, we define the \textbf{number of summands} (of this decomposition of $N$) to be $a_1 + \dots + a_m$.
	\end{defi}
	
	\begin{theorem}[Generalized Zeckendorf's Theorem for PLRS {\cite[Theorem 1.1]{MillerWang}}]\label{generalzeck}
		Every integer $N \in [H_n, H_{n + 1})$ has a unique legal decomposition of the form
		\begin{equation}
			N \ = \ \sum_{i=1}^{n} {a_i H_{n+1-i}}.
		\end{equation}
	\end{theorem}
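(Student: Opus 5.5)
We prove existence and uniqueness together by strong induction on $n$, using the sequence of partial sums along the ``greedy'' legal block. The basic identity is that for every $n$, the largest legal decomposition with highest index $n$, namely the one obtained by repeatedly taking the maximal block $c_1,\dots,c_L$ and reaching the end, sums to $H_{n+1}-1$. Concretely, we first establish
\begin{equation}
H_{n+1} \ = \ c_1H_n + c_2H_{n-1} + \cdots + c_{s-1}H_{n-s+2} + (\text{the value of } s\text{-th level}) ,
\end{equation}
and more usefully the lemma: for each $n\ge 1$, the integers represented by legal sequences of length exactly $n$ (i.e.\ with $a_1>0$ at position $H_n$) are exactly those in $[H_n,H_{n+1})$, with the largest one equal to $H_{n+1}-1$. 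For $n<L$ this is immediate from the initial conditions $H_{n+1}=c_1H_n+\cdots+c_nH_1+1$, since the first bullet of the definition gives the maximal sequence $c_1,\dots,c_n$ with value $H_{n+1}-1$.

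For existence, take $N\in[H_n,H_{n+1})$ and run the greedy-by-blocks procedure. Let $s$ be the least index such that $N < c_1H_n+\cdots+c_{s-1}H_{n-s+2}+(c_s)H_{n-s+1}$ (such $s\le L$ exists when $n\ge L$ because the recurrence gives $H_{n+1}=c_1H_n+\cdots+c_LH_{n-L+1}$ and $N<H_{n+1}$; when $n<L$ the $+1$ in the initial conditions handles the boundary). Set $a_i=c_i$ for $i<s$, and $a_s=\lfloor (N-\sum_{i<s}c_iH_{n+1-i})/H_{n+1-s}\rfloor<c_s$. The remainder $N'$ satisfies $0\le N'<H_{n+1-s}$. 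If $N'=0$ we are done; otherwise $N'\in[H_k,H_{k+1})$ for some $k\le n-s$, so by induction it has a legal decomposition of length $k$; inserting $\ell=n-s-k$ zeros gives a legal decomposition of $N$. One must check $a_1>0$: if $s=1$ then $a_1=\lfloor N/H_n\rfloor\ge1$ since $N\ge H_n$.

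For uniqueness, the key step (and the main obstacle) is the upper bound lemma: any legal sequence of length $m$ has value $<H_{m+1}$. We prove it by induction on $m$: writing a legal sequence as the prefix $c_1,\dots,c_{s-1},a_s$ with $a_s<c_s$, zeros, then a legal tail $b$ of length $m-s-\ell\le m-s$, the inductive hypothesis bounds the tail by $H_{m-s+1}-1$, so the total is at most $\sum_{i<s}c_iH_{m+1-i}+(c_s-1)H_{m+1-s}+H_{m+1-s}-1\le \sum_{i\le s}c_iH_{m+1-i}-1\le H_{m+1}-1$, using nonnegativity of the $c_i$ and the recurrence (or initial conditions when $m<L$). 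Since $a_1>0$ also gives value $\ge H_m$, a legal decomposition of $N\in[H_n,H_{n+1})$ must have length exactly $n$. Now suppose two legal decompositions $a,a'$ of $N$; take the first index $j$ where they differ, say $a_j>a'_j$. Subtracting the common prefix, one side has value at least $H_{n+1-j}$ beyond the shared part while the other's remainder, being a legal-type tail starting at position $j$ constrained by the same prefix pattern, is bounded above by $H_{n+1-j}-1$ via the same telescoping estimate. This forces a contradiction. The delicate bookkeeping is that the tail of $a'$ from position $j$ is not itself a legal sequence in general (it may start mid-block), so we will state the bound lemma in a strengthened form: for a legal sequence whose current block agrees with $c_1,\dots,c_{t}$ up to position $t$, the value of positions $t+1,\dots,m$ is less than $c_{t+1}H_{m-t}+\cdots$ (the remaining part of the recurrence), which again follows from the same induction.
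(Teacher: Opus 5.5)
The paper gives no proof of this statement; it quotes it from Miller and Wang, so your argument has to stand on its own. Your existence argument and your lemma that a legal sequence of length $m$ has value in $[H_m,H_{m+1})$ are fine.

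The gap is in the last step of uniqueness. There you need the part of $a'$ strictly after position $j$ to be at most $H_{n+1-j}-1$. For the case where the block of $a'$ containing position $j$ continues past $j$, you propose a strengthened lemma bounding such a tail by ``the remaining part of the recurrence'' $c_{t+1}H_{m-t}+c_{t+2}H_{m-t-1}+\cdots$. That lemma is true, but it does not give the bound you need. No telescoping estimate can, because with zero coefficients a mid-block tail can exceed $H_{n+1-j}$. Take $c=(1,0,5)$, so $H_1,\dots,H_5=1,2,3,8,18$. The string $1\,0\,4\,1$ is legal (its value is $17=H_5-1$). Yet the part after its leading digit is $4H_2+H_1=9>8=H_4$, while your strengthened lemma only gives $9<c_2H_3+c_3H_2=10$.

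What is missing is the observation that this situation never occurs at the first differing index. A legal string is parsed deterministically from the left: a block ends at the first position where the digit drops below $c_s$, and the next block starts at the next nonzero digit. Since $a$ and $a'$ agree in positions $1,\dots,j-1$, they are in the same state at position $j$. Either both have just matched $c_1,\dots,c_{r-1}$ of the current block, so $a_j,a'_j\le c_r$, or both are between blocks, so $a_j,a'_j\le c_1$. Hence $a'_j<a_j$ forces $a'_j$ strictly below the largest allowed digit. So the block of $a'$ ends at position $j$, and what follows in $a'$ is a run of zeros and then a legal sequence of length at most $n-j$. Your basic bound then gives $\sum_{i>j}a'_iH_{n+1-i}\le H_{n+1-j}-1$. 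On the other hand, equality of the two totals gives $\sum_{i>j}a'_iH_{n+1-i}\ge (a_j-a'_j)H_{n+1-j}\ge H_{n+1-j}$, a contradiction. With this observation the strengthened lemma is unnecessary.
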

	
	Uniqueness in decompositions fails for some sequences. For example, in \cite{CFHMN}, Catral et al. found that for some non-PLRS sequences, like Fibonacci quilt sequences, not all integers have exactly one decomposition. 
	Recently, \cite{Salami} studied Zero Linear Recurrence Relations (ZLRRs), another setting in which zero coefficients arise. Unlike our PLRS setting, ZLRRs have a zero first coefficient and are not applicable as our zero coefficients are in a different location, which can lead to non-unique decompositions and complications for the greedy algorithm.\footnote{After choosing the largest available summand, the algorithm continues by choosing smaller-indexed summands.}
	
	The uniqueness in decompositions for PLRS makes it natural to ask statistical questions about these decompositions. Since all legal decompositions of PLRS sequences are also the ones obtained by the greedy algorithm, studying all integers $N$ in an interval such as \([H_n,H_{n+1})\) and asking what a typical legal decomposition looks like is reasonable as now all integers should have the same largest summand and are thus of the same scale.
	
	\begin{remark}
		We are localizing so that the objects studied are of similar scale. Having $N$ in $[H_n, H_{n+1})$ ensures the largest summand of its decomposition is $H_n$; thus all integers in this interval have the same largest summand. While it is appropriate to so normalize, we would see similar behavior if instead we studied all integers from $[1, H_{n+1})$ because that interval is the union of the intervals $[H_i, H_{i+1})$. As $\{H_i\}$ is an exponentially growing sequence, as $n\to\infty$ almost all of the integers in $[1, H_{n+1})$ are in any union of intervals over indices from $n-\lfloor \log n\rfloor$ to $n$, and changing the interval by such a small amount will not change to first order any of the quantities we study.
	\end{remark}   
	%Otherwise we could be investigating $N_1$ with largest summand say $H_{19}$ and $N_2$ with largest summand $H_{10^{10}}$, so much larger that we would expect remarkably different behavior. One could look at all numbers up to $H_n$ or $H_{n+1}$ and find that, as the behavior changes very slowly, if you look at intervals $I_k := [H_k, H_{k+1})$ then if we look at $\bigcup_{k=n-\lfloor \log n \rfloor}^{n} I_k$, then each of the $I_k$ has about the same behavior, and almost all of the integers at most $H_{n+1}$ live in these $log(n)$ intervals, so going all the way down to $1$ won't change anything.
	
	%PROF MILLER: PLEASE EXPAND
	%\end{remark}
	
	Lekkerkerker proved that the average number of summands in Zeckendorf decompositions of integers in \([F_n,F_{n+1})\) grows linearly in \(n\) (see Theorem \ref{generallek}). Later, various works studied limiting laws for the number of summands. For example, Kolo\u{g}lu, Kopp, Miller, and Wang \cite{KologluKoppMillerWang} proved that the distribution of the number of summands in Zeckendorf decompositions converges to a Gaussian as $n \to \infty$. Miller and Wang~\cite{MillerWang} extended this result to generalized Zeckendorf decompositions arising from PLRS.

	\begin{definition}
		We say that \(f(x)=O(g(x))\) as \(x \to \infty\) if there exist constants
		\(C>0\) and \(x_0\) such that
		\begin{equation}
			|f(x)| \ \leq \ Cg(x)
		\end{equation}
		for all \(x \geq x_0\).
		
		We say that \(f(x) = o(g(x))\) as \(x \to\infty\) if
		\begin{equation}
			\lim_{x\to\infty}\frac{f(x)}{g(x)} \ = \ 0.
		\end{equation}
	\end{definition}
	
	\begin{theorem}[Generalized Lekkerkerker’s Theorem for PLRS {\cite[Theorem 1.2]{MillerWang}}]\label{generallek}
		Let $\{H_n\}_{n=1}^{\infty}$ be a Positive Linear Recurrence Sequence. Choose $N$ uniformly at random from $[H_n, H_{n + 1})$, and let $K_n$ denote the number of summands in the legal decomposition of $N$. Denote the mean of $K_n$ by $\mu_n$. Then there exist constants $C > 0$, $d$, and $\gamma_1 \in (0,1)$ depending only on $L$ and the $c_i$’s in the recurrence relation of the $H_n$’s such that
		\begin{equation}
			\mu_n \ = \ Cn + d + o(\gamma_1^n).
		\end{equation}
	\end{theorem}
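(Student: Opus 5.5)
The plan is a generating-function argument. Let $p_{n,k}$ be the number of integers $N\in[H_n,H_{n+1})$ whose legal decomposition has exactly $k$ summands. I would form the bivariate generating function $\mathcal{G}(x,y)=\sum_{n,k}p_{n,k}x^ky^n$ and compute $\mu_n$ by differentiating in $x$ and setting $x=1$.

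\textbf{Recurrence for $p_{n,k}$.} Legal decompositions have the block structure of \eqref{scon}. A decomposition with top index $n$ begins with a block $a_1=c_1,\dots,a_{s-1}=c_{s-1}$, $a_s<c_s$. This block is followed by $\ell$ zeros and then a legal tail of smaller length. Counting by the first block gives
\[
p_{n,k} \ = \ \sum_{s=1}^{L}\ \sum_{j=0}^{c_s-1} p_{n-s,\,k-(c_1+\cdots+c_{s-1}+j)},
\]
up to boundary terms for small $n$ handled by the initial conditions. One needs the correct interpretation with the $\ell$ zeros absorbed, i.e.\ $p$ counts by exact top index, so one sums over all smaller indices or uses cumulative counts. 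Equivalently, I would work with $N<H_{n+1}$ cumulatively, where the recurrence is cleaner.

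\textbf{Rational form of $\mathcal{G}$.} Summing the recurrence against $x^ky^n$ yields $\mathcal{G}(x,y)=B(x,y)/A(x,y)$, with
\[
A(x,y) \ = \ 1-\sum_{s=1}^{L}x^{c_1+\cdots+c_{s-1}}\,(1+x+\cdots+x^{c_s-1})\,y^s ,
\]
and $B$ a polynomial coming from the initial terms. At $x=1$ this reduces to $1-\sum c_sy^s$, the reciprocal characteristic polynomial of $\{H_n\}$.

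\textbf{Spectral step.} The characteristic polynomial of a PLRS has a simple positive dominant root $\lambda_1>1$ with all other roots of strictly smaller modulus. This is a Perron--Frobenius fact for the companion matrix, which is primitive because $c_1\ge1$. I would verify it, since it is the only place positivity of $c_1,c_L$ enters. By the implicit function theorem, for $x$ near $1$ the smallest root $y_1(x)$ of $A(x,\cdot)$ is simple and analytic. Partial fractions then give, for $x$ near $1$,
\[
g_n(x)\ :=\ \sum_k p_{n,k}x^k \ = \ q_1(x)\,y_1(x)^{-n}\bigl(1+O(\gamma^n)\bigr)
\]
uniformly, with $\gamma<1$ fixed by the gap between $|y_1|$ and the other roots.

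\textbf{Computing $\mu_n$.} We have $\mu_n=g_n'(1)/g_n(1)$. Taking logarithmic derivatives,
\[
\mu_n \ = \ -n\,\frac{y_1'(1)}{y_1(1)}+\frac{q_1'(1)}{q_1(1)}+O(\gamma_1^n).
\]
This gives $C=-y_1'(1)/y_1(1)$ and the constant $d=q_1'(1)/q_1(1)$. Positivity of $C$ follows from implicit differentiation: $y_1'(1)=-\partial_xA/\partial_yA$ at $(1,y_1(1))$. Here $\partial_xA<0$ because every coefficient polynomial is increasing in $x$. Also $\partial_yA<0$ at the smallest positive root, since $A(1,y)$ is decreasing on $y>0$. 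Hence $y_1'(1)<0$ and $C>0$.

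\textbf{Main obstacle.} The hard step is justifying that the error is uniformly geometric in a neighborhood of $x=1$ and survives differentiation. I would handle this with a Cauchy estimate on a small complex disc around $x=1$, on which the root separation persists by continuity.
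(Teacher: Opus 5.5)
Your plan is correct and is essentially the generating-function proof of Miller--Wang, which the paper cites rather than reproves: the same recurrence $p_{n+1,k}=\sum_{m=0}^{L-1}\sum_{j=s_m}^{s_{m+1}-1}p_{n-m,k-j}$ with $s_m=c_1+\cdots+c_m$, the same denominator $A(x,y)$, and $\mu_n$ read off from the logarithmic derivative of the dominant term $q_1(x)y_1(x)^{-n}$, giving $C=-y_1'(1)/y_1(1)>0$. Two details are worth writing out. First, the block recurrence holds cleanly for cumulative counts of legal strings of length $n$ (leading zeros allowed) and passes to $p_{n,k}$ by differencing. Second, $q_1(1)\neq 0$, which you need for $d$ to be defined; this holds because $g_n(1)=H_{n+1}-H_n\ge c_LH_{n+1-L}$ grows like $\lambda_1^n$.
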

	
	A related problem, which is the direction of this paper, is the distribution of gaps between summands. If a legal decomposition contains only the summands \(H_{j_1},H_{j_2},\dots,H_{j_k}\), with the indices written in non-increasing order, then the differences between consecutive indices are called \textbf{gap sizes} (i.e., $j_1 - j_2$ where $j_1 \ge j_2$ represents the size of the gap between $H_{j_1}$ and $H_{j_2}$). Previous work studied gap distributions for PLRS when all recurrence coefficients are positive \cite{BILMT}, Kangaroo recurrences \cite{BBGILMT}, which are PLRS with zero coefficients that occur in groups of size $g$, and ZLRR \cite{Salami}, where the first recurrence coefficient is zero. Ben-Ari and Miller \cite{BM} also developed a probabilistic approach to generalized Zeckendorf decompositions that applies to certain recurrences with zero coefficients. Their framework establishes general limiting results but the techniques require a lot more heavy machinery --- thus the value of our paper is not decreased. 
	Although in each of the these settings, the limiting gap distribution was shown to exhibit geometric decay, their results complement rather than cover the PLRS case studied here, as our zero coefficients occur in different locations.
	
	The purpose of this paper is to test the generality of the geometric decay. We study the simplest next case --- three-term sequences with coefficients 0 and 1, but unlike the Kangaroo recurrence, we can have coefficients 0 and 1 placed anywhere as long as the resulting sequence is still a PLRS. In particular, we analyze the sequences of form
	\begin{equation}
		H_{n + 1} \ = \ H_n + H_{n - p} + H_{n - p - q},
	\end{equation}
	with initial conditions to match those for a PLRS. We consider all integers $N$ in the interval $[H_n, H_{n + 1})$.
	Our goal is to determine whether, as $n \to \infty$, the limiting gap distribution still geometrically decays by a factor of $r_1$ for these more general sequences. We prove this geometric decay for all sufficiently large gaps. For the finitely many smaller gaps, when $p<q+2$, we prove that the limiting probabilities exist and give a transfer-matrix formula for computing them. It is very plausible that the existence of the small gap limit can still be proved when $p \ge q+2$ but our current transfer matrix does not prove it.
	
	Before we state our main theorems, we need a definition.
	
	\begin{definition}
		Define $P_n(g)$ as the probability density of a gap of size $g$ from all decompositions of integers in $[H_n, H_{n + 1})$, and set
		\begin{equation}
			P(g) \ := \ \lim_{n \to \infty} P_n(g).
		\end{equation}
	\end{definition}
	
	\subsection{Main Theorems}
	
	\begin{theorem}\label{gap_large}
		Consider the recurrence $H_{n+1}=H_n+H_{n-p}+H_{n-p-q}$, with initial conditions to match a PLRS.
		For \textbf{large gap sizes} (i.e., $g > \max\{p, q\}$), $P(g)$ decays geometrically in $r_1$ --- namely,
		\begin{equation}
			P(g) 
			\ = \ K_{p,q} \cdot r_1^{-g} 
			\ = \ \frac{\kappa_1}{C}(r_1^q+1)^2 r_1^{-2p-2q} \cdot r_1^{-g},
		\end{equation} with $r_1$ the dominant root of the characteristic polynomial, $\kappa_{1}$ the generalized Binet constant in front of $r_1$, and $C$ the computable Lekkerkerker constant.
	\end{theorem}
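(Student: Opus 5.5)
We follow the counting strategy of Bower et al.\ \cite{BILMT}. Write the legal decompositions for $c=(1,0,\dots,0,1,0,\dots,0,1)$, where $c_1=1$, $c_{p+1}=1$, $c_{p+q+1}=1$ and all other entries are $0$, so $L=p+q+1$. For $N\in[H_n,H_{n+1})$ and a pair of indices $i>j$ with $g=i-j$, let $X_{i,j}(n)$ count the $N$ whose decomposition contains $H_i$ and $H_j$ as consecutive summands. Then
\begin{equation}
P_n(g) \ = \ \frac{\sum_{i-j=g} X_{i,j}(n)}{(\mu_n-1)(H_{n+1}-H_n)},
\end{equation}
and by Theorem \ref{generallek}, $\mu_n-1\sim Cn$. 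The Binet expansion $H_m=\kappa_1 r_1^m+O(r_2^m)$ with $|r_2|<r_1$ handles the remaining normalization.

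The key step is to factor $X_{i,j}(n)$ for large gaps. If $g>\max\{p,q\}$, the gap is strictly longer than any run of zeros allowed inside a block of the form \eqref{scon} with $s\ge 2$, except possibly the trailing zeros of a block. So $H_i$ must be the last nonzero digit of a completed block, and $H_j$ starts a new block. Hence the decomposition splits as follows. The top part, on indices $i$ through $n$, is a legal string ending exactly at $H_i$. The middle part consists of zeros. The bottom part is an arbitrary legal decomposition with top summand $H_j$, or more precisely a legal string beginning at $H_j$ with all lower digits free. The bottom factor counts integers in $[H_j,H_{j+1})$, giving $H_{j+1}-H_j$. The top factor counts legal prefixes of length $n-i$ that end in a nonzero digit at a block boundary. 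Up to boundary corrections, this is $H_{n-i+1}-H_{n-i}$ times a correction accounting for which block the last summand $H_i$ closes. The last summand closes a block in one of three ways: at $c_1$, at $c_{p+1}$ (after $p$ zeros), or at $c_{p+q+1}$ (after $p+q$ zeros). Summing over these cases is where the factor $(r_1^q+1)r_1^{-p-q}$ should come from. Its square arises because the same combinatorics appears symmetrically on the top side (how $H_i$ ends a block) and the bottom side (how $H_j$ begins one, relative to the shifted count $H_{j+1}-H_j$). I will verify this bookkeeping by matching it to the recurrence $r_1^{p+q+1}=r_1^{p+q}+r_1^q+1$.

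Next, substitute the Binet asymptotics. Then $X_{i,j}(n)\approx \kappa_1^2 (r_1-1)^2 (r_1^q+1)^2 r_1^{-2p-2q}\, r_1^{n-g}$, up to factors of $r_1$ that I will track, uniformly for $i,j$ at distance $\gg 1$ from $1$ and $n$. Summing over the roughly $n-g$ admissible pairs and dividing by $Cn\cdot\kappa_1 r_1^n(r_1-1)$, then letting $n\to\infty$, gives $\frac{\kappa_1}{C}(r_1^q+1)^2r_1^{-2p-2q}r_1^{-g}$. The pairs with $i$ or $j$ within $\log n$ of the ends contribute $O(\log n/n)$, and the $O((|r_2|/r_1)^m)$ Binet errors sum to $o(n)$. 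Both vanish in the limit.

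The main obstacle is the exact combinatorial identification of the top factor. I must count legal strings on $n-i+1$ digits whose final digit is a nonzero digit terminating a block, and show that this count equals a fixed linear combination of $H_{n-i-p-q+\cdot}$ terms. I will first attempt this by conditioning on which of the three coefficients the last $1$ is, and peeling off the block with the recursive definition of legality. That peeling expresses each case as a full count $H_{m+1}-H_m$ at a shifted index. If the resulting constant does not visibly simplify to $(r_1^q+1)r_1^{-p-q}$, I will simplify it using the characteristic equation.
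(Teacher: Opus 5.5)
Your skeleton matches the paper's: split at the gap, take the bottom factor $H_{j+1}-H_j$, apply Binet on the middle range, show the $O(\log n)$ edge terms are negligible, and normalize by $Cn(H_{n+1}-H_n)$. But the step that carries the content, the top factor, is never established, and your heuristics for it are wrong.

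For $g>\max\{p,q\}$ there is no ``correction for which block $H_i$ closes''. A legal string on positions $n,\dots,i$ with both end digits $1$ must end in the terminal prefix $1$ or $10^{p-1}1$ of the coefficient string. The $g-1\ge\max\{p,q\}$ zeros of the gap complete these to $10^{p+\ell}$ or $10^{p-1}10^{q+\ell}$, respectively. This is exactly where both $g>p$ and $g>q$ are used. Also, a block can never close at $c_{p+q+1}$, since matching all of $c$ is illegal, so there are two cases, not three.

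So the top factor is simply the number $T_m$ of legal strings of length $m=n-i+1$ whose first and last digits are $1$. The missing idea is the paper's complementary count: take the strings with first digit $1$ and subtract those whose last digit is $0$. The latter correspond to strings of length $m-1$ by deleting the trailing zero. Hence
\[
T_m \;=\; (H_{m+1}-H_m)-(H_m-H_{m-1}) \;=\; H_{m-2p-1}+2H_{m-2p-q-1}+H_{m-2p-2q-1}.
\]
Your peeling plan does not produce ``full counts $H_{m+1}-H_m$ at shifted indices''. Conditioning on the last $1$ gives $T_m=F_{m-1}+F_{m-p-1}$, where $F_k$ counts legal strings of length $k$ ending at a completed block. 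This $F$ satisfies the same recurrence as $H$ but with different initial values, so its leading Binet constant is a new unknown that you would still have to relate to $\kappa_1$. Doing so essentially requires the complementary count anyway.

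The constant bookkeeping is also off. From $T_m\sim\kappa_1(r_1-1)^2r_1^{m-1}$ and $H_{j+1}-H_j\sim\kappa_1(r_1-1)r_1^{j}$ you get $X_{i,j}(n)\sim\kappa_1^2(r_1-1)^3r_1^{n-g}$. Using $(r_1-1)r_1^{p+q}=r_1^q+1$, which is the characteristic equation, this equals $\kappa_1^2(r_1-1)(r_1^q+1)^2r_1^{-2p-2q}\,r_1^{n-g}$.

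Your displayed asymptotic has $(r_1-1)^2$ in place of $(r_1-1)$. Dividing it by $Cn\,\kappa_1r_1^n(r_1-1)$ leaves a spurious factor $r_1-1$, which is not $1$ here since $1<r_1<2$. So it does not give the stated constant, and the discrepancy is not ``a factor of $r_1$'' to be tracked.

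Likewise, the square in $K_{p,q}=\frac{\kappa_1}{C}(r_1^q+1)^2r_1^{-2p-2q}=\frac{\kappa_1}{C}(r_1-1)^2$ does not come from a top/bottom symmetry. The bottom's single factor $r_1-1$ cancels against $H_{n+1}-H_n$ in the normalization. The square comes entirely from the second difference on the top side.
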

	
	\begin{theorem}\label{gap_small}
		Consider the recurrence $H_{n+1}=H_n+H_{n-p}+H_{n-p-q}$, with initial conditions to match a PLRS.
		Define \textbf{small gap sizes} as $1 \le g \le \max\{p, q\}$.
		For each fixed choice of $p$, $q$, and small gap size $g$, the limiting probability $P(g)$ exists when $p < q + 2$ and its value generally depends on the ordered pair $(p, q)$ and on $g$; namely,
		
		\begin{equation}
			P(g) 
			\ = \ K_{p, q, g} 
			\ = \ \frac{r_1^{- g - 1} \mathbf{1}^T P M_1 M_0^{g - 1} M_1 P v_{p + q}}{C \kappa_{1} (r_1^{q} + 1)},
		\end{equation} with $r_1$ the dominant root of the characteristic polynomial, $\kappa_{1}$ the generalized Binet constant in front of $r_1$, $C$ the computable Lekkerkerker constant, and $P$ the computable Perron projection matrix. The matrices $M_0$ and $M_1$ are the transfer matrices corresponding respectively to appending a $0$ and appending a
		$1$ to the current state, and $v_{p+q}$ is the
		initial state-count vector for legal strings of length $p+q$ with the largest summand $H_n$ forced.
	\end{theorem}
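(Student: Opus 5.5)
We count gaps directly, in the style of Bower et al. \cite{BILMT}. Fix a small gap size $g$ and a position. The number of gaps of size exactly $g$ between summands $H_i$ and $H_{i-g}$, summed over all legal decompositions of $N\in[H_n,H_{n+1})$, factors as a product of three counts. The first counts legal prefixes running from the forced leading summand $H_n$ down to $H_i$. The second is the single configuration of the $g$ positions that places a $1$ at $H_i$, zeros strictly between, and a $1$ at $H_{i-g}$. The third counts legal suffixes below $H_{i-g}$.

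For $H_{n+1}=H_n+H_{n-p}+H_{n-p-q}$, the coefficient string is $(1,0^{p-1},1,0^{q-1},1)$, so legality is a finite-memory condition. The state of a partial string can be taken as how far we currently are into a maximal block matching a prefix of $c_1c_2\cdots c_L$, together with the forced-zero run after a deviation $a_s<c_s$. This yields a finite automaton with transfer matrices $M_0$ and $M_1$ for appending a $0$ or a $1$. The number of legal strings of length $m$ with forced top summand is then $\mathbf{1}^T(\text{product of } M\text{'s})\,v_{p+q}$, where $v_{p+q}$ is the state vector after the first $p+q$ digits, that is, after the initial block in which the leading summand is forced.

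The key step is to show that the combined matrix $M=M_0+M_1$ is primitive, with Perron root equal to $r_1$, the dominant root of $x^{p+q+1}-x^{p+q}-x^q-1$. I would verify this by showing that $\mathbf{1}^TM^m v$ satisfies the same recurrence as $H_m$, since it counts integers in an interval of length comparable to $H_m$. Perron--Frobenius then gives
\[
M^m \ = \ r_1^m P + O(\lambda^m), \qquad \lambda<r_1,
\]
with $P$ the rank-one Perron projection. I expect primitivity, and more precisely aperiodicity together with the correct identification of the reachable states, to be the main obstacle. This is where the hypothesis $p<q+2$ should enter. When $p\ge q+2$, the run of $p-1$ zeros inside the coefficient block interacts badly with the forced zero runs, and the automaton either fails to be irreducible on the states reached from $v_{p+q}$ or fails to separate into the clean prefix/middle/suffix factorization. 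My first approach would be an explicit path argument: show that every state returns to the empty state by appending zeros, and that the empty state reaches every state. Aperiodicity would follow from the existence of return loops of lengths $q+1$ and $p+q+1$, whose greatest common divisor is $1$.

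Given this, the gap count at position $i$ is
\[
\mathbf{1}^T M^{\,i-g-1}\,M_1M_0^{g-1}M_1\,M^{\,n-i-p-q}\,v_{p+q}.
\]
Replacing both outer powers by $r_1^{(\cdot)}P$ plus exponentially smaller errors and summing over the $\sim n$ bulk positions gives total count $n\,r_1^{n-g-p-q-1}\,\mathbf{1}^TPM_1M_0^{g-1}M_1Pv_{p+q}(1+o(1))$. The edge positions within $O(\log n)$ of either end contribute $o(n r_1^n)$. We then divide by the total number of gaps, which is $(H_{n+1}-H_n)(\mu_n-1)$. By Theorem \ref{generallek} and the generalized Binet formula $H_m=\kappa_1r_1^m+O(\lambda^m)$, this equals $Cn\,\kappa_1r_1^n(r_1-1)(1+o(1))$.

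Finally, the characteristic equation gives $r_1^{p+q}(r_1-1)=r_1^q+1$, so the factor $r_1^{-p-q}/(r_1-1)$ becomes $1/(r_1^q+1)$. Absorbing normalization constants into $P$, the limit exists and equals the stated $K_{p,q,g}$.
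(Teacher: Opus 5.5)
Your proposal is correct. It follows the paper's architecture: a product formula at each position (yours is the paper's with $i$ shifted by $g$), replacement of the two free blocks by $r_1^{(\cdot)}P$ in the bulk, a $\log n$ edge window, and division by $(H_{n+1}-H_n)(\mu_n-1)$. Your use of $r_1^{p+q}(r_1-1)=r_1^q+1$ is just the paper's $H_{n-p}+H_{n-p-q}\sim\kappa_1(r_1^q+1)r_1^{n-p-q}$ in another form.

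The genuine difference is the automaton. The paper takes states to be legal binary windows of length $p+q$, justified by Theorem~\ref{illegal state}, and that construction is the only place $p<q+2$ enters. The paper also only asserts, and checks numerically, that the Perron root of its $M$ is $r_1$. Your parse automaton (an empty state plus the length $j\le p+q$ of the matched prefix of $1\,0^{p-1}\,1\,0^{q-1}\,1$) has only $p+q+1$ states, encodes the legality definition directly, and is trivially primitive. Your identification of its Perron root via $\mathbf{1}^TM^{m-p-q}v_{p+q}=H_{m+1}-H_m\sim\kappa_1(r_1-1)r_1^m$ proves a step the paper leaves unproved.

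Consequently your guess about where $p<q+2$ enters is mistaken: nothing in your route uses it. The parse automaton still works for $p\ge q+2$, where the paper's window states genuinely break because legality is no longer decided by any bounded window. For $(p,q)=(3,1)$, the string $1(001)^k01$ is legal exactly when $k$ is odd, even though for $k\ge1$ the last four digits before appending $01$ are always $1001$.

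Your constant agrees with the paper's even though your $M_0$, $M_1$, $P$, $v_{p+q}$ are different matrices. In either automaton, $\mathbf{1}^TPM_1M_0^{g-1}M_1Pv_{p+q}$ is the limit of $r_1^{-(a+b)}$ times the same string count, so there is nothing to ``absorb into $P$''.

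Two small slips:
\begin{enumerate}
\item $\gcd(q+1,p+q+1)=\gcd(q+1,p)$ need not be $1$; it is $2$ for $(p,q)=(2,1)$. The $0$-self-loop at your empty state gives aperiodicity at once, as the all-zero state does in the paper.
\item The zero run after a deviation is not forced. After $a_s=0<c_s=1$ you simply return to the empty state.
\end{enumerate}
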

	
	Tables \ref{tab:large-gap-data} and \ref{tab:small-gap-data} present numerical values of $P(g)$ for several choices of $p, q,$ and $g$.
	Table \ref{tab:large-gap-data} summarizes the numerical results for the large-gap probabilities. For each ordered pair $(p, q)$ and each large gap size $g > \max \{p, q\}$, we list the limiting probabilities predicted by Theorem \ref{gap_large}, and when $n = 35$ the finite-$n$ approximation $P_{35}(g)$ obtained by brute-force counting. The two values are in close agreement, providing numerical support for the large-gap formula.
	Table \ref{tab:small-gap-data} summarizes the numerical results for the small-gap probabilities in the cases where $p<q+2$. For each pair $(p,q)$ and each small gap size $g$, we list the dominant root $r_1$ (or the Perron eigenvalue $\lambda$ of the transfer matrix), the limiting probability predicted by Theorem \ref{gap_small}, the total contribution from the large-gap range, the sum over all gap sizes, and the finite-$n$ approximation $P_{35}(g)$. The limiting small-gap probabilities are in close agreement with the corresponding values of $P_{35}(g)$ obtained by brute-force counting. Together with the fact that the total probabilities are close to $1$ with only rounding errors, this provides a numerical check of the transfer-matrix method.
	
	\begin{remark}
		The numerical data suggest that the small-gap limiting probabilities may exhibit geometric decay over certain ranges, although this behavior does not necessarily hold across all small gaps. The exceptional behavior occurs at particular boundary gap sizes, where the recurrence may create additional interactions between the summands on the two sides of the gap. A precise description of the geometric decay among the small gaps is given in Theorem \ref{thm:small_gap_decay}, together with its proof.
	\end{remark}
	
	\begin{table}[ht]
		\centering
		\scriptsize
		\begin{tabular}{c c c c c c c c c}
			\hline
			$p$ & $q$ & $g$ & $\kappa_1$ & $C_{\mathrm{Lek}}$ & $r_1$
			& $P(g)$
			& $\sum P(g)$ large $g$
			& $P_{35}(g)$ \\
			\hline
			
			1 & 2 & 3 & 0.722124418 & 0.326378645 & 1.754877666 & 0.233293644 & 0.542341925 & 0.230812321 \\
			&   & 4 &             &             &             & 0.132940118 &             & 0.127630879 \\
			&   & 5 &             &             &             & 0.075754636 &             & 0.070509529 \\
			\hline
			
			2 & 1 & 3 & 0.723606798 & 0.276393202 & 1.618033989 & 0.236067978 & 0.618033989 & 0.240416291 \\
			&   & 4 &             &             &             & 0.145898034 &             & 0.144237126 \\
			&   & 5 &             &             &             & 0.090169944 &             & 0.086456041 \\
			\hline
			
			1 & 3 & 4 & 0.814040668 & 0.297756511 & 1.704902776 & 0.160784731 & 0.388879636 & 0.154548036 \\
			&   & 5 &             &             &             & 0.094307273 &             & 0.087898873 \\
			&   & 6 &             &             &             & 0.055315338 &             & 0.049943357 \\
			\hline
			
			2 & 2 & 3 & 0.811108777 & 0.241709338 & 1.570147312 & 0.281798472 & 0.776054020 & 0.289167104 \\
			&   & 4 &             &             &             & 0.179472633 &             & 0.178824829 \\
			&   & 5 &             &             &             & 0.114303054 &             & 0.110489019 \\
			\hline
			
			3 & 1 & 4 & 0.819222321 & 0.155953755 & 1.497094049 & 0.258396563 & 0.778210798 & 0.190569410 \\
			&   & 5 &             &             &             & 0.172598751 &             & 0.124254447 \\
			&   & 6 &             &             &             & 0.115289184 &             & 0.098140515 \\
			\hline
			
			1 & 4 & 5 & 0.894229519 & 0.282740362 & 1.673648546 & 0.109296593 & 0.271542312 & 0.101682253 \\
			&   & 6 &             &             &             & 0.065304387 &             & 0.058859127 \\
			&   & 7 &             &             &             & 0.039019176 &             & 0.034035468 \\
			\hline
			
			2 & 3 & 4 & 0.891359789 & 0.220855136 & 1.538496592 & 0.208893679 & 0.596813829 & 0.208780412 \\
			&   & 5 &             &             &             & 0.135777798 &             & 0.131677290 \\
			&   & 6 &             &             &             & 0.088253558 &             & 0.082971234 \\
			\hline
			
			3 & 2 & 4 & 0.896185072 & 0.194254004 & 1.465571232 & 0.216756572 & 0.682327803 & 0.224258782 \\
			&   & 5 &             &             &             & 0.147899036 &             & 0.148558566 \\
			&   & 6 &             &             &             & 0.100915624 &             & 0.098322866 \\
			\hline
			
			4 & 1 & 5 & 0.907738526 & 0.128265612 & 1.419632763 & 0.216127106 & 0.731165790 & 0.175694096 \\
			&   & 6 &             &             &             & 0.152241560 &             & 0.140324066 \\
			&   & 7 &             &             &             & 0.107240100 &             & 0.098345708 \\
			\hline
			
			1 & 5 & 6 & 0.962099524 & 0.275311854 & 1.653634587 & 0.073017650 & 0.184727850 & 0.065584710 \\
			&   & 7 &             &             &             & 0.044155856 &             & 0.038383830 \\
			&   & 8 &             &             &             & 0.026702305 &             & 0.022439490 \\
			\hline
			
			2 & 4 & 5 & 0.964324423 & 0.208037272 & 1.516784253 & 0.154198981 & 0.452580714 & 0.149506649 \\
			&   & 6 &             &             &             & 0.101661776 &             & 0.095565703 \\
			&   & 7 &             &             &             & 0.067024546 &             & 0.061025875 \\
			\hline
			
			3 & 3 & 4 & 0.968233828 & 0.178392975 & 1.443268791 & 0.245780788 & 0.800254264 & 0.255440245 \\
			&   & 5 &             &             &             & 0.170294535 &             & 0.171864302 \\
			&   & 6 &             &             &             & 0.117992252 &             & 0.115533807 \\
			\hline
			
			4 & 2 & 5 & 0.977162977 & 0.151370617 & 1.397035275 & 0.191225916 & 0.672860492 & 0.184649969 \\
			&   & 6 &             &             &             & 0.136879805 &             & 0.129954309 \\
			&   & 7 &             &             &             & 0.097978775 &             & 0.090410099 \\
			\hline
			
			5 & 1 & 6 & 0.990756210 & 0.139847372 & 1.365254707 & 0.145956029 & 0.545556710 & 0.170687658 \\
			&   & 7 &             &             &             & 0.106907545 &             & 0.123037779 \\
			&   & 8 &             &             &             & 0.078305934 &             & 0.087825712 \\
			\hline
			
		\end{tabular}
		\caption{Large-gap data.}
		\label{tab:large-gap-data}
	\end{table}

	\begin{table}[ht]
		\centering
		\scriptsize
		\begin{tabular}{c c c c c c c c}
			\hline
			$p$ & $q$ & $g$ & $r_1$ (or $\lambda$)
			& $P(g)$
			& $\sum P(g)$ large $g$
			& $\sum P(g)$ all $g$
			& $P_{35}(g)$ \\
			\hline
			
			1 & 2 & 1 & 1.754877666 & 0.196866065 & 0.542341925 & 1.000000001 & 0.216044289 \\
			&   & 2 &             & 0.260792011 &             &             & 0.268433483 \\
			\hline
			
			2 & 1 & 1 & 1.618033989 & 0           & 0.618033989 & 1.000000000 & 0 \\
			&   & 2 &             & 0.381966011 &             &             & 0.400385763 \\
			\hline
			
			1 & 3 & 1 & 1.704902776 & 0.143768434 & 0.388879636 & 1.000000000 & 0.165810625 \\
			&   & 2 &             & 0.294572363 &             &             & 0.303503501 \\
			&   & 3 &             & 0.172779567 &             &             & 0.172979258 \\
			\hline
			
			2 & 2 & 1 & 1.570147312 & 0           & 0.776054020 & 1.000000000 & 0 \\
			&   & 2 &             & 0.223945980 &             &             & 0.241540540 \\
			\hline
			
			1 & 4 & 1 & 1.673648546 & 0.101562455 & 0.271542312 & 1.000000000 & 0.126665506 \\
			&   & 2 &             & 0.320744607 &             &             & 0.329814431 \\
			&   & 3 &             & 0.191643943 &             &             & 0.191499920 \\
			&   & 4 &             & 0.114506683 &             &             & 0.111096625 \\
			\hline
			
			2 & 3 & 1 & 1.538496592 & 0           & 0.596813829 & 1.000000000 & 0 \\
			&   & 2 &             & 0.177255371 &             &             & 0.196966720 \\
			&   & 3 &             & 0.225930800 &             &             & 0.235126066 \\
			\hline
			
			3 & 2 & 1 & 1.465571232 & 0           & 0.682327803 & 0.999999999 & 0 \\
			&   & 2 &             & 0           &             &             & 0 \\
			&   & 3 &             & 0.317672196 &             &             & 0.335550315 \\
			\hline
			
			1 & 5 & 1 & 1.653634587 & 0.069613145 & 0.184727850 & 1.000000000 & 0.097780841 \\
			&   & 2 &             & 0.340239296 &             &             & 0.348675306 \\
			&   & 3 &             & 0.205752407 &             &             & 0.204902952 \\
			&   & 4 &             & 0.124424349 &             &             & 0.120311951 \\
			&   & 5 &             & 0.075242953 &             &             & 0.070579825 \\
			\hline
			
			2 & 4 & 1 & 1.516784253 & 0           & 0.452580714 & 1.000000001 & 0 \\
			&   & 2 &             & 0.137119308 &             &             & 0.159214967 \\
			&   & 3 &             & 0.247274492 &             &             & 0.257301422 \\
			&   & 4 &             & 0.163025487 &             &             & 0.165440707 \\
			\hline
			
			3 & 3 & 1 & 1.443268791 & 0           & 0.800254264 & 1.000000000 & 0 \\
			&   & 2 &             & 0           &             &             & 0 \\
			&   & 3 &             & 0.199745736 &             &             & 0.217320999 \\
			\hline
			
		\end{tabular}
		\caption{Small-gap data.}
		\label{tab:small-gap-data}
	\end{table}

	%%%%%%%%%%%%%%%%%%%%%%%%%%%%%%%%%%%%%%%%%%%%%%%%%%%%%%%%%%%%%%%%%%%%%%%%%%%%%%%%%%%%%%%%%%%%%%%%%%%%%%%%%%%%%%%%%%%%%%%%%%%%%%%%%%%%%%%%%%%%%%%%%%%%%%%%%%%%%%%%%%%%%%
	
	\section{Proof of Theorem \ref{gap_large}} \label{large gap section}
	
	\subsection{Total number of gaps}
	
	By Theorem \ref{generallek}, the average number of summands in the legal decompositions of all $N \in [H_n, H_{n + 1})$ is $Cn + d + o(\gamma_1^n)$. Since a decomposition with $k$ summands contains $k - 1$ gaps, the average number of gaps is $Cn + d - 1 + o(\gamma_1^n).$
	
	The total number of gaps is equal to the average number of gaps multiplied by the total number of decompositions. By the uniqueness of legal decompositions, the total number of decompositions in $[H_n, H_{n + 1})$ equals the number of integers in this interval, namely $H_{n + 1} - H_n = H_{n - p} + H_{n - p - q}$. Thus, the total number of gaps in the legal decompositions of all $N \in [H_n, H_{n + 1})$ is
	\begin{equation}
		(H_{n - p} + H_{n - p - q}) (Cn + d - 1 + o(\gamma_1^n)).
	\end{equation}
	
	\subsection{Number of occurrences of a gap of size $g$}
	
	\begin{lemma}\label{gap_no.}
		We define $X_{i, g}(n)$ as the total number of legal decompositions where all summands are chosen from $\set{H_1, \dots, H_n}$, such that $H_{n}$, $H_{i + g}$ and $H_i$ are forcefully chosen, all indices between $H_i$ and $H_{i + g}$ cannot be chosen, and all remaining terms are free to be chosen subject to the legality conditions.
		The number of occurrences of a gap of size $g > \max\{p, q\}$ in the legal decompositions of all $N \in [H_n, H_{n + 1})$ is
		\begin{equation}
			\sum_{i = 1}^{n - g} X_{i, g}(n) \ = \ \sum_{i = 1}^{n - g} (H_{i - p} + H_{i - p - q})(H_{n - i - g - 2p} + 2 H_{n - i - g - 2p - q} + H_{n - i - g - 2p - 2q}).
		\end{equation}
	\end{lemma}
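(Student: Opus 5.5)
The plan is to read off the block structure of legal decompositions for $H_{n+1}=H_n+H_{n-p}+H_{n-p-q}$ and show that a large gap splits each decomposition into two independent pieces. Here $L=p+q+1$ and $(c_1,\dots,c_L)=(1,0^{p-1},1,0^{q-1},1)$. In the legality condition \eqref{scon}, $a_s<c_s$ forces $c_s=1$, so a legal string is a concatenation of three kinds of blocks (read from the top index down) followed by zero padding:
\begin{itemize}
\item a lone zero $0$;
\item a \emph{single} $1\,0^{p}$, i.e.\ a summand followed by at least $p$ non-chosen indices;
\item a \emph{pair} $1\,0^{p-1}\,1\,0^{q}$, i.e.\ two summands at distance exactly $p$ followed by at least $q$ non-chosen indices.
\end{itemize}
There is also an exceptional terminal piece from the first bullet of the definition ($m<L$, prefix of $c$), which only matters near index $1$ and is absorbed into the initial conditions.

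\textbf{Step 1 (splitting at the gap).} Let $g>\max\{p,q\}$, and suppose $H_{i+g}$ and $H_i$ are chosen with nothing in between. Since $g>p$, the summand $H_{i+g}$ cannot be the first member of a pair, so it is either a single or the second member of a pair. In either case its block needs only $p$ (respectively $q$) trailing zeros, and the $g-1\ge\max\{p,q\}$ empty indices supply them. Hence the string cuts cleanly into two parts:
\begin{itemize}
\item an \emph{upper part} on indices $n,\dots,i+g$, with $H_n$ and $H_{i+g}$ chosen, ending in a completed block at $H_{i+g}$;
\item a \emph{lower part}, which is an arbitrary legal decomposition with largest summand $H_i$.
\end{itemize}
Conversely, any such pair of parts glues to a legal decomposition with this gap. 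So $X_{i,g}(n)$ factors as (upper count)$\times$(lower count).

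\textbf{Step 2 (lower factor).} By Theorem \ref{generalzeck}, the lower factor is the number of integers in $[H_i,H_{i+1})$. This equals $H_{i+1}-H_i=H_{i-p}+H_{i-p-q}$, with the usual convention that $H_j=0$ for $j\le 0$ handling small $i$.

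\textbf{Step 3 (upper factor).} Split according to whether $H_{i+g}$ is a single or the second member of a pair. Write $u$ for the portion above the last block.
\begin{itemize}
\item \emph{Single case:} the string is $u\,1$, where $u$ is a concatenation of complete blocks starting with a forced $1$ at $H_n$.
\item \emph{Pair case:} the string is $u\,1\,0^{p-1}\,1$.
\end{itemize}
Let $A_\ell$ be the number of complete-block strings of length $\ell$ whose first entry is a forced $1$. I will find a closed form for $A_\ell$ by the same first-block decomposition used to prove Theorem \ref{generalzeck}. The first block is a single or a pair, so $A_\ell=T_{\ell-p-1}+T_{\ell-p-q-1}$, where $T_k$ counts complete-block strings of length $k$ with free start. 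Next I show that $T_k$ satisfies the defining recurrence with shifted initial values. Appending padding zeros relates it to the count of all legal strings, which is $H_{k+1}$. From this I expect $T_k$ to be a shifted $H$, so that $A_\ell$ is a sum of two shifted $H$'s.

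\textbf{Step 4 (assembly).} Substitute $\ell=n-i-g$ (single case) and $\ell=n-i-g-p$ (pair case), add the two contributions, and sum over $i$. The target is the factor
\[
\bigl(H_{a}+H_{a-q}\bigr)+\bigl(H_{a-q}+H_{a-2q}\bigr),\qquad a=n-i-g-2p,
\]
which matches the claimed $H_{a}+2H_{a-q}+H_{a-2q}$.

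The main obstacle is Step 3. The concern is getting the index shifts exactly right so that the single and pair contributions combine into precisely this form. The trailing zeros of the final block are supplied by the gap rather than by the upper part, so there is an off-by-$p$ or off-by-$q$ effect that must be tracked carefully. I would first verify the upper count by hand for $p=1$, $q=2$ and small $n-i-g$, and then fix the general shift to match.
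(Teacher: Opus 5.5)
Your Steps 1 and 2 are sound. The block description (singles $1\,0^{p}$, pairs $1\,0^{p-1}\,1\,0^{q}$, padding zeros, a possibly truncated last block) justifies the independence of the two sides more carefully than the paper does.

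The genuine gap is Step 3, which carries the entire content of the left factor, and the route you sketch there cannot work. The count $T_k$ of complete-block strings is not a shifted $H$. For $(p,q)=(1,2)$ one gets $T_k=1,1,2,3,6,10,18,\dots$ against $H_k=1,2,4,7,12,21,37,\dots$, and $T_k/H_k$ tends to about $0.84$, which is not an integer power of $r_1\approx 1.755$. Nor can the single and pair cases be matched to the two brackets separately. For $(p,q)=(1,2)$ and $n-i-g=5$, the single case gives $4$ strings and the pair case gives $3$, while $H_a+H_{a-q}=5$ and $H_{a-q}+H_{a-2q}=2$. Only the total $7$ agrees, so adjusting shifts by hand will not close the step.

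The missing observation is that the upper part is exactly a standalone legal string of length $m=n-i-g+1$ whose first and last digits are $1$. Its final $1$ is either a new block or the second member of a pair, and both are compatible with the $g-1\ge\max\{p,q\}$ zeros that follow. The paper counts these by inclusion--exclusion:
\begin{itemize}
\item strings with first digit $1$ number $H_{m+1}-H_m$;
\item those that also end in $0$ correspond, by deleting the trailing zero, to legal strings of length $m-1$ with first digit $1$ (appending a zero only extends a truncated block or adds padding), so they number $H_m-H_{m-1}$;
\item two applications of $H_{k+1}-H_k=H_{k-p}+H_{k-p-q}$ then give $H_a+2H_{a-q}+H_{a-2q}$.
\end{itemize}
If you prefer to keep your block route, note that complete-block strings that are empty or begin with $1$ number $T_\ell-T_{\ell-1}$. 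So the upper count has generating function $(1+x^p)(1-x)/Q(x)$ with $Q(x)=1-x-x^{p+1}-x^{p+q+1}$. This coincides with the generating function of $H_{\ell+2}-2H_{\ell+1}+H_\ell$ once you prove $\sum_{m\ge 0}H_{m+1}x^m=(1+x+\cdots+x^{p+q})/Q(x)$ and $H_0=1$.

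Second, the convention $H_j=0$ for $j\le 0$ makes the stated identity false, and the lemma is an exact identity, not an asymptotic one. At $i=1$ the right factor would be $H_{1-p}+H_{1-p-q}=0$, whereas $H_2-H_1=1$. At $i=n-g$ the left factor would be $0$ instead of $1$. You must use the values obtained by running the recurrence backward, as the paper does; for $(p,q)=(1,2)$ these are $H_0=1$, $H_{-1}=1$, $H_{-2}=0$, $H_{-3}=-1$. Similarly, your $A_0$ has to be $1$ to cover an empty $u$: the cases $i+g=n$ with a single, and $i+g=n-p$ with a pair. Your formula $A_\ell=T_{\ell-p-1}+T_{\ell-p-q-1}$ gives $A_0=0$.
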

	
	\begin{proof}
		We prove Lemma \ref{gap_no.} by counting the number of possible arrangements to the left and to the right\footnote{Throughout this paper, we list the indices in decreasing order, as produced by the greedy algorithm. Thus the ``right'' side of a gap refers to the smaller-indexed summands.} of a gap of size $g$. Let $H_i$ and $H_{i + g}$ be consecutive summands in the decomposition. Now we discuss the number of arrangements to either side of the gap $g$ (see Figure \ref{fig:NumberGap}).
		
		\begin{figure}[h]
			\begin{center}
				\scalebox{.21}{\includegraphics{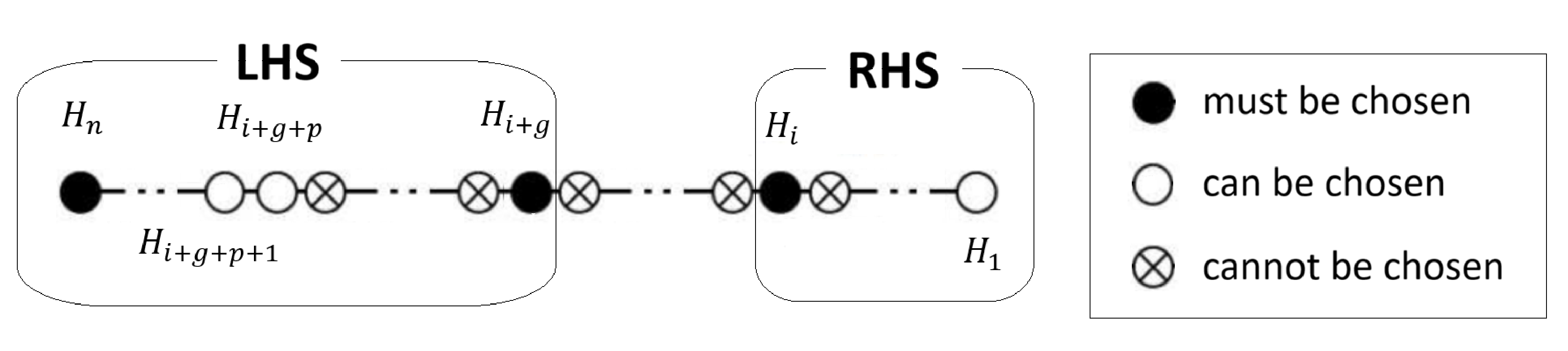}}
				\caption{\label{fig:NumberGap} Number of gaps of size $g$.}
		\end{center}\end{figure}
		
		\begin{remark}
			Lemma \ref{gap_no.} can only be proven with large $g$, since, otherwise, the left and right sides of the gap from $H_i$ to $H_{i + g}$ are not independent and can interact with each other through the recurrence relation.
		\end{remark}
		
		First, the number of arrangements to the right of the gap (including $H_i$) equals the number of legal choices from $\set{H_1, H_2, \dots, H_i}$ with $H_i$ chosen. By the existence and uniqueness of legal decompositions for this PLRS, these choices are in bijection with the total number of integers in the interval $[H_i, H_{i + 1})$. Thus, the number of such arrangements is $H_{i + 1} - H_i = H_{i - p} + H_{i - p - q}$.
		
		Next, the number of arrangements to the left of the gap (including $H_{i + g}$) correspond to the number of legal choices from $\set{H_{i + g}, H_{i + g + 1}, \dots, H_n}$ with $H_{i + g}$ and $H_n$ chosen. By shifting indices, this bijects with the number of legal choices from $\set{H_1, H_2, \dots, H_{n - i - g + 1}}$ in which $H_1$ and $H_{n - i - g + 1}$ must be chosen. In order to compute this quantity, we find the number of arrangements where $H_{n - i - g + 1}$ must be used and subtract the number of arrangements where $H_{n - i - g + 1}$ must be used but $H_1$ cannot be used.
		
		The number of arrangements where $H_{n - i - g + 1}$ must be used is $H_{n - i - g + 2} - H_{n - i - g + 1} = H_{n - i - g - p + 1} + H_{n - i - g - p - q + 1}$, and the number of arrangements where $H_{n - i - g + 1}$ must be used but $H_1$ cannot be used is $H_{n - i - g + 1} - H_{n - i - g} = H_{n - i - g - p} + H_{n - i - g - p - q}$.
		Thus, the number of valid left-hand arrangements is $(H_{n - i - g - p + 1} + H_{n - i - g - p - q + 1}) - (H_{n - i - g - p} + H_{n - i - g - p - q}) = H_{n - i - g - 2p} + 2 H_{n - i - g - 2p - q} + H_{n - i - g - 2p - 2q}$.
		
		Multiplying the number of choices on the left- and right-hand-sides and summing over all possible positions $i$ of the gaps gives the total number of occurrences of a gap of size $g$:
		\begin{equation}\label{no.gap_g}
			\sum_{i = 1}^{n - g} X_{i, g}(n) \ = \ \sum_{i = 1}^{n - g} (H_{i - p} + H_{i - p - q})(H_{n - i - g - 2p} + 2 H_{n - i - g - 2p - q} + H_{n - i - g - 2p - 2q}).
		\end{equation}
		In cases where negative indices\footnote{The negative-index extension is used only to handle boundary terms uniformly. Since the backward values are uniquely determined by the same recurrence, it only gives a consistent way to evaluate the summation near the endpoints without introducing any new legal decompositions.} appear, we extend the recurrence backward until $-2p - 2q$.
	\end{proof}
	
	\subsection{The distribution of the average gap measure}
	
	The probability of choosing a gap of size $g$ from all decompositions of all $N \in [H_n, H_{n + 1})$ is
	\begin{equation}\label{png}
		P_n(g) \ = \ \frac{\sum_{i = 1}^{n - g} X_{i, g}(n)}{(H_{n - p} + H_{n - p - q}) (Cn + d - 1 + o(\gamma_1^n))}.
	\end{equation}
	By the generalized Binet formula \cite{MartinezMillerMizgerdMurphySun2},
	\begin{equation}
		H_n \ = \ \kappa_1 r_1^n + O(\rho ^n),
	\end{equation}
	where $r_1$ is the unique dominant root and $|r_2|, |r_3|, \dots ,|r_{p + q + 1}| < r_1$. We can prove that $\kappa_1$ must be positive. We know that $H_n$ and $r_1$ are positive and we have
	\begin{equation}
		\frac{H_n}{r_1^n}
		\ = \ \kappa_1 + O \left(\left(\frac{\rho}{r_1}\right)^n\right).
	\end{equation}
	The error term tends to zero, as $|\rho|<r_1$.
	
	We can prove the uniqueness of the leading root by proving that it is  not of multiplicity $2$ or greater. We prove this fact for all PLRS instead of just our specific three-term one to aid future researchers. To do so, it suffices to prove that there are no roots of multiplicity $2$. If there is a root $x$ of multiplicity $2$, then for some $x$ we have $f(x) = f'(x) = 0$. Suppose there exists such a root. The characteristic equation is
	\begin{equation}
		f(x) \ = \ x^{L} - \sum_{j = 1}^{L} c_j x^{L - j}.
	\end{equation}
	The derivative of the characteristic equation is 
	\begin{equation}
		f'(x) \ = \ L x^{L - 1} - \sum_{j = 1}^{L} c_j (L - j) x^{L - j - 1}.
	\end{equation}
	Calculating the roots of $f'(x) = 0$, we can easily see $x = 0$ is a root and now we prove that there does not exist any positive roots which can satisfy $f'(x) = 0$. To do so, we multiply both sides of $f'(x) = 0$ by $x$ to obtain
	\begin{equation}
		x f'(x) \ = \ L x^L - \sum_{j = 1}^{L} c_j (L - j) x^{L - j} \ = \ 0.
	\end{equation}
	From the characteristic equation $f(x) = 0$, we obtain $x^{L} = \sum_{j = 1}^{L} c_j x^{L - j}$ for roots $x$ for $f$.
	Substituting this into the previous equation, we get
	\begin{equation}
		x f'(x) \ = \ L \sum_{j = 1}^{L} c_j x^{L - j} - \sum_{j = 1}^{L} c_j (L - j) x^{L - j} \ = \ 0.
	\end{equation}
	Simplifying yields
	\begin{equation}
		x f'(x) \ = \ \sum_{j = 1}^{L} j c_j x^{L - j} \ = \ 0,
	\end{equation}
	which cannot be true if $x$ is positive as each $c_j \ge 0$ (and at least one is positive).
	Thus, there are no positive roots of $f'(x) = 0$, which means that there cannot be a double leading root for any PLRS.

	Let $ a = i - p$, $ b = i - p - q$, $ c = n - i - g - 2p$, $ d = n - i - g - 2p - q$ and $ e = n - i - g - 2p - 2q$ for notation convenience.
	From Lemma \ref{gap_no.}, we have
	\begin{equation} \label{pq_expansion}
		X_{i, g} (n)
		\ = \ (H_a + H_b)(H_c + 2H_d + H_e).
	\end{equation}
	We now separate the summation into three regions:
	\begin{equation}
		\sum_{i=1}^{n-g} X_{i, g}(n) \ = \ \sum_{i=1}^{f(n)} X_{i, g}(n) + \sum_{i=f(n)+1}^{n-g-f(n)-1} X_{i, g}(n) + \sum_{i=n-g-f(n)}^{n-g} X_{i, g}(n),
	\end{equation}
	where we let $f(n) = \log n$ such that\footnote{We choose $\log n$ for definiteness, but we just need $f(n) \to \infty$ and $f(n)/n \to 0$, when $n \to \infty$.}, when $n \to \infty$, we have $f(n) \to \infty$. Let $h(n) = n$. By L'H\^{o}pital's Rule, we get
	\begin{equation}
		\lim_{n \to \infty} \frac{f(n)}{h(n)}
		\ = \ \lim_{n \to \infty} \frac{f'(n)}{h'(n)}
		\ = \ \lim_{n \to \infty} \frac{1/n}{1}
		\ = \ 0 
	\end{equation}
	(i.e., $f(n) = o(n)$).
	
	The first and third sums correspond to the left and right edge regions respectively, each of which contains at most $f(n)$ terms.
	Since each non-leading product has order at most $r_1^{n-g-3p}$, we get the total left edge size as
	\begin{equation}
		\sum_{i=1}^{f(n)} X_{i, g}(n)  \ = \ O(f(n) \cdot r_1^{n - g - 3p}).
	\end{equation}
	As for the denominator of \eqref{png}, it is of order $n \cdot r_1^{n - p}$.
	
	By forming the ratio, we obtain that 
	\begin{equation}
		\frac{\text{left edge contribution}}{\text{denominator}}
		\ = \ O \left( \frac{f(n) \cdot r_1^{n - g - 3p}}{n \cdot r_1^{n-p}} \right)
		\ = \ O \left( \frac{f(n)}{n} \cdot r_1^{- g - 2p}\right).
	\end{equation}
	As $f(n)/n \to 0$, the left edge contribution is negligible. Similarly, we find that the right edge contribution is also negligible.
	
	We now consider the middle range 
	\begin{equation}
		f(n) +1 \ \leq \ i \ \leq \ n-g-f(n) - 1.
	\end{equation}
	In this region, all of $a, b, c, d, e \ge f(n) + O(1)$.
	So the Binet approximation in the middle range is of form
	\begin{equation}
		H_m \ = \ \kappa_1 r_1^m + O(\rho ^m) 
		\ = \ \kappa_1 r_1^m \left( 1 + O\left( \left( \frac{\rho}{r_1} \right)  ^m\right) \right) 
		\ = \ \ \kappa_1 r_1^m (1+o(1)),
	\end{equation}
	where $\kappa_{1} > 0$ and $|\rho|<r_1$ implying $0<|\rho/r_1|<1$.
	
	Note that $a + c = n - g - 3p$, $a + d = b + c = n - g - 3p - q$, $a + e = b + d = n - g - 3p - 2q$, and $b + e = n - g - 3p - 3q$.
	Therefore, from \eqref{pq_expansion} we have
	\begin{align}
		X_{i,g}(n)
		&\ = \ (H_a+H_b)(H_c+2H_d+H_e) \notag\\
		&\ = \ \kappa_1^2(r_1^a+r_1^b)(r_1^c+2r_1^d+r_1^e)(1+o(1)) \notag\\
		&\ = \ \kappa_1^2 r_1^{n-g-3p-3q}(r_1^q+1)^3
		+o\!\left(r_1^{n-g-3p-3q}\right).
		\label{eq:Xig-asymptotic}
	\end{align}
	for $i$ in the middle range.
	Since $2f(n) + O(1) = o(n)$, summing the $n-2f(n)+O(1)$ terms over the middle range gives
	\begin{equation}\label{num_asym}
		\begin{aligned}
			\sum_{i=f(n)+1}^{n-g-f(n)-1} X_{i, g}(n)
			& \ = \ \kappa_1^2 r_1^{n - g - 3p - 3q} (r_1^q + 1)^3 (n - o(n))  + o(n r_1^{n - g - 3p - 3q}) \\
			& \ = \ \kappa_1^2 r_1^{n-g-3p - 3q} (r_1^{q} + 1)^3 n + o(n r_1^{n-g-3p - 3q}).
		\end{aligned}
	\end{equation}
	Substituting this into \eqref{png}, and using
	$ H_{n-p} + H_{n - p - q} = \kappa_1 (1 + r_1^{q}) r_1^{n-p - q} + o(r_1^{n-p}), $
	we obtain
	\begin{equation}
		P_n(g)
		\ = \ \frac{\kappa_1^2 r_1^{n-g-3p - 3q} (r_1^{q} + 1)^3 n + o(n r_1^{n-g-3p - 3q})}
		{(\kappa_1 (r_1^{q} + 1) r_1^{n-p - q} + o(r_1^{n-p})) (Cn + d - 1 + o(\gamma_1^n))}.
	\end{equation}
	The denominator can be written as
	$$
	(\kappa_1 (r_1^{q} + 1) r_1^{n-p - q} + o(r_1^{n-p})) (Cn + O(1)) \ \sim \ (\kappa_1 (r_1^{q} + 1) r_1^{n-p - q} + o(r_1^{n-p})) Cn,
	$$
	so
	\begin{equation}
		P_n(g)
		\ = \ \frac{\kappa_1^2 r_1^{n-g-3p - 3q} (r_1^{q} + 1)^3 n + o(n r_1^{n-g-3p - 3q})} {\kappa_1 (r_1^{q} + 1) r_1^{n-p - q} Cn + o(n r_1^{n-p})}.
	\end{equation}
	Simplifying gives us
	\begin{equation}
		P_n(g)
		\ = \ \frac{\kappa_1}{C} (r_1^{q} + 1)^2 r_1^{-2p - 2q} \cdot r_1^{-g} + o(1).
	\end{equation}
	Taking the limit as $n \to \infty$, we obtain
	\begin{equation}
		P(g) 
		\ = \ \lim_{n \to \infty} P_n(g) 
		\ = \ \frac{\kappa_1}{C} (r_1^{q} + 1)^2 r_1^{-2p - 2q} \cdot r_1^{-g} 
		\ = \ K_{p, q} \cdot r_1^{-g},
	\end{equation}
	for some constant $K_{p, q} > 0$. Since $r_1 > 1$, this shows that the distribution decays geometrically in $r_1$.
	\hfill $\square$

	\section{Proof of Theorem \ref{gap_small}}\label{smallgap_section}
	
	When discussing small $g$, the left side of $H_{i + g}$ (included) and the right side of $H_{i}$ (included) are no longer independent. For example, if the legal decomposition on the left hand side of the gap $g$ is $H_n + H_{i + g + p} + H_{i + g}$ and the legal decomposition on the right side of the gap $g$ is $H_i + H_{i - p}  + H_{i + g - q} + H_1$, then once the two sides are combined together, the decomposition becomes illegal thus leading to an overcount since we have $H_{i + g + p} + H_{i + g} + H_{i + g - q} = H_{i + g + p + 1}$ by the recurrence. 
	
	However, for fixed $p, q$ and $g$, any illegal interaction caused by combining the left and right sides of the gap must occur inside a bounded window around the forced pair $H_{i+g}, H_i$. The size of this window depends only on $p, q$ and $g$, but not $n$, and mainly there are only finitely many types of left-hand-side and right-hand-side interruptions (two common examples of which are shown in Figure \ref{fig:ErrorTypes}). This implies that there are only finitely many types of local interactions.
	If we can prove in general, for each legal choice with certain fixed restrictions, the number of such legal decompositions has asymptotic form $C_{p,q,g} r_1^n + O(\rho^n)$ and after summing over $i$, the total contribution has form $C_{p,q,g} n r_1^n + O(n \rho^n)$, where $C_{p,q,g}$ is some non-zero constant in terms of $p, q,$ and $g$, then we can prove that the limiting probability density for small $g$ approaches a constant, since our total number of gaps $(H_{n - p} + H_{n - p - q}) (Cn + d - 1 + o(\gamma_1^n))$ = $D_{p,q} n r_1^{n} + o(n r_1^n)$ for some constant $D_{p,q} > 0$.
	
	\begin{figure}[h]
		\begin{center}
			\scalebox{.18}{\includegraphics{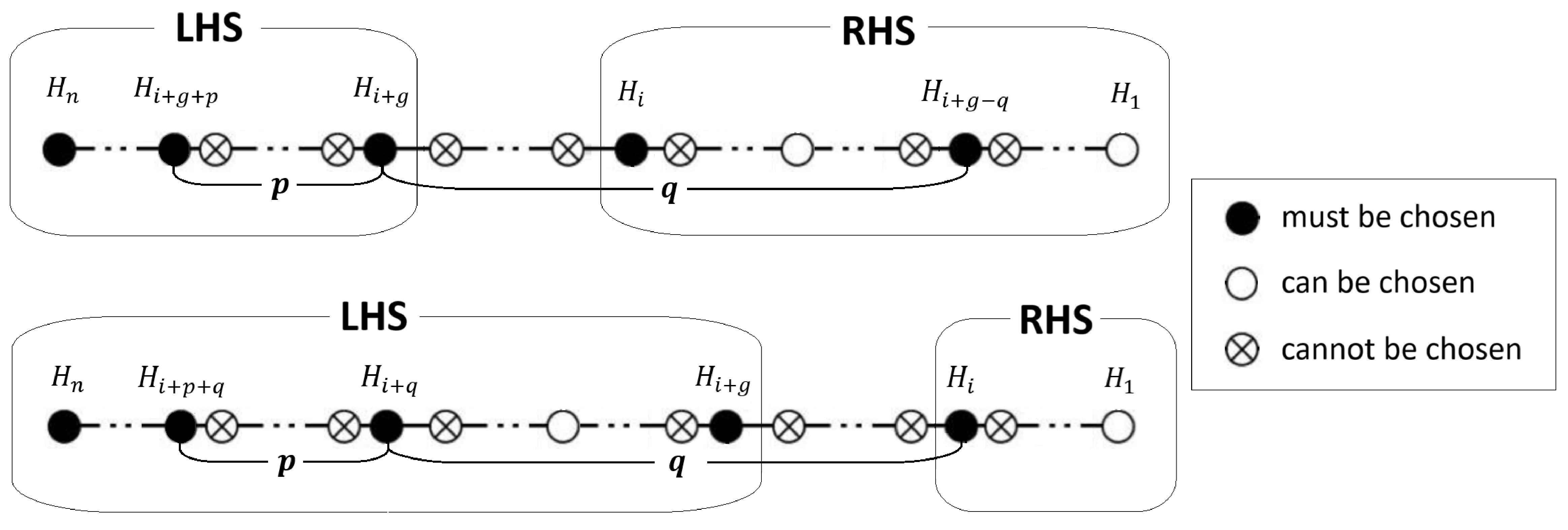}}
			\caption{\label{fig:ErrorTypes} Examples: two types of left and right hand side interactions.}
	\end{center}\end{figure}
	
	It is easy to prove certain types of restricted legal decompositions have such asymptotic form. For example, define $A_n$ as the total number of legal decompositions from $\set{H_1, \dots, H_n}$ where $H_n$ must be chosen and all other summands are free to be chosen, and define $B_n$ as the total number of legal decompositions from $\set{H_1, \dots, H_n}$ where $H_n$ and $H_1$ must be chosen and all other summands are free to be chosen. We can use the counting method from the proof of Lemma \ref{gap_no.} and get $A_n = H_{n + 1} - H_n$, and $B_n = H_{n + 1} - 2H_n + H_{n - 1}$. As $n \to \infty$, by Binet's formula $H_n = \kappa_{1} r_1^n + \kappa_{2} r_2^n + \dots + \kappa_{p + q + 1} r_{p + q + 1}^n = \kappa_1 \cdot r_1^{n} + O(\rho^n)$, substituting $H_n$ into the values of $A_n$ and $B_n$, we get
	\begin{equation}
		A_n \ = \ \kappa_1 \cdot r_1^{n + 1} - \kappa_1 \cdot r_1^{n} + O(\rho^n) \ = \ \alpha \cdot r_1^{n} + O(\rho^n),
	\end{equation}
	and
	\begin{equation}
		B_n \ = \ \kappa_{1} \cdot r_1^{n + 1} - 2 \kappa_{1} \cdot r_1^{n} + \kappa_{1} \cdot r_1^{n - 1} + O(\rho^{n}) \ = \ \beta \cdot r_1^{n} + O(\rho^n),
	\end{equation}
	where $\alpha$, $\beta$, and $\rho$ are all positive constants, and $|\rho| < r_1$. These are examples of legal choices with fixed endpoint restrictions.
	
	What about other middle-point restrictions --- say more generalized restrictions? Suppose, for example, that we define $C_n$ as the total number of legal decompositions from $\set{H_1, \dots, H_n}$ where certain restrictions are imposed, such as one fixed term being required to be chosen and another fixed term being required not to be chosen. To deal with such restrictions, we introduce a \textbf{finite-state transfer matrix}. This matrix method shows that the number of legal decompositions satisfying any fixed finite list of local restrictions has an asymptotic expansion of the form $C_{p,q,g} r_1^n + O(\rho^n)$, with $|\rho| < r_1$ and positive $C_{p,q,g}$ depends only on the restriction but not on $n$.
	
	A string $c_nc_{n-1} \dots c_1$ represents the decomposition $c_n H_n + c_{n-1} H_{n-1} + \dots + c_1 H_{1}$,
	where $c_j = 1$ means $H_j$ is chosen and $c_j = 0$ means $H_j$ is not chosen.
	
	For example, when $p = 1$ and $q = 2$, the recurrence is $H_{n+1} = H_n + H_{n-1} + H_{n-3}$, and the corresponding string pattern $1101$ is forbidden because it can be replaced by the single larger summand by the recurrence. Also, blocks containing $111$ are forbidden because it exceeds the allowed coefficient pattern.
	
	A \textbf{state} is a binary string of length $p+q$ that can legally appear as the most recent $p+q$ digits of a decomposition under the recursive PLRS legality rule. Let $\ell$ denote the total number of states: $S_1, S_2, \dots, S_{\ell}$.
	
	The total number of $(p + q)$-digit long binary strings is $2^{p + q}$.
	Now we explicitly list all the forms of illegal states and remove them from the list of all ($p + q$)-digit long binary strings, giving us all the legal states.
	\begin{remark}
		This transfer-matrix method only works when $p < q + 2$, meaning that everything in this section from this point onward is discussed under this condition. In Section \ref{fail}, we clearly explain why this method cannot be applied when $p\geq q+2$.
	\end{remark}
	
	\begin{theorem}\label{illegal state}
		If $p=q$ or $p=q+1$, then the illegal binary states are those containing patterns
		$$
		1\underbrace{00\dots 0}_{a \ 0\text{'s}}1,
		\qquad 0 \ \leq \ a \ \leq \ p-2.
		$$
		If $p<q$, then the illegal binary states are those containing patterns of forms shown above, 
		as well as all patterns of forms
		$$
		1\underbrace{00\dots 0}_{p-1 \ 0\text{'s}}1
		\underbrace{00\dots 0}_{b \ 0\text{'s}}1,
		\qquad p-1 \ \le \ b \ \le \ q-2.
		$$
	\end{theorem}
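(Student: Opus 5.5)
The plan is to first turn the recursive legality rule of the PLRS definition into an explicit local description for our recurrence, and then read off which length-$(p+q)$ windows can occur. For $H_{n+1}=H_n+H_{n-p}+H_{n-p-q}$ we have $L=p+q+1$ and coefficient vector $c_1=c_{p+1}=c_{p+q+1}=1$, with all other $c_j=0$. Consider a block of a legal decomposition as in \eqref{scon}. It must begin with $a_1=1$, and the index $s$ with $a_s<c_s$ must satisfy $c_s=1$. Since $a_1>0$ rules out $s=1$, we get $s\in\{p+1,\,p+q+1\}$. Hence every block has one of two forms: $1\,0^{p-1}\,0$ (the case $s=p+1$), or $1\,0^{p-1}\,1\,0^{q-1}\,0$ (the case $s=p+q+1$). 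Either form is followed by an arbitrary run of zeros and then another legal block. I will also check the short terminal case $m<L$, where the string is a prefix of $c_1\cdots c_m$. This case again has the shape $1\,0^{p-1}\,1\,0^{\le q-1}$ or a shorter prefix of it, so it introduces nothing new. The conclusion is that a $0/1$ string is legal exactly when it parses into such blocks, read from the left.

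Next I will derive the forbidden patterns from this description. Inside a block, every $1$ that starts the block is followed by at least $p-1$ zeros. The second $1$ of a type-two block is followed by at least $q-1$ zeros and then a further forced $0$. Between blocks only zeros are added. So no two $1$'s can be separated by $a\le p-2$ zeros, which rules out $1\,0^{a}\,1$ for $0\le a\le p-2$. Moreover, a $1$ placed exactly $p$ positions after a block-starting $1$ must be that block's middle digit $a_{p+1}$. This is forced: the alternative $s=p+1$ would require $a_{p+1}=0$. It follows that the pattern $1\,0^{p-1}\,1\,0^{b}\,1$ is illegal for every $b\le q-1$. The case $b=q-1$ is exactly the recurrence pattern, which has length $p+q+1$ and so does not fit in a window of length $p+q$; hence only $b\le q-2$ matters. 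Values $b\le p-2$ are already covered by the first family, because such a pattern contains $1\,0^{b}\,1$. This leaves $p-1\le b\le q-2$, a nonempty range only when $p<q$. When $p=q$ or $p=q+1$ we have $q-2\le p-2$, so the second family is absorbed into the first, which gives the stated dichotomy.

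For the converse, I must show that any length-$(p+q)$ string avoiding these patterns really occurs as the last $p+q$ digits of some legal decomposition. I will prefix it with a leading $1$ followed by at least $p+q$ zeros, so that the first $1$ of the window starts a fresh block. I then parse the window greedily from the left. After a block-starting $1$, the next $p-1$ digits are zero by the first family. The $p$-th digit after it either is $0$, which closes a type-one block, or is $1$. In the latter case the second family, together with the length bound, guarantees that the next $1$, if any, comes after at least $q$ zeros, or does not appear inside the window at all. Finally, a block truncated by the window's end can always be completed by zeros to a legal block.

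I expect this converse step to be the main obstacle. The window may cut blocks at both ends, and I need the padding choice to be consistent with the forced readings $a_{p+1}=1$. This is exactly the place where the hypothesis $p<q+2$ should be used. For $p\ge q+2$ the window length $p+q$ is too short, relative to $p$, to determine the phase of the parse. I will verify carefully that no other obstruction, such as the forced extra $0$ after $0^{q-1}$ in a type-two block, creates additional forbidden windows when $p<q+2$.
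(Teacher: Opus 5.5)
Your proof is correct within the theorem's range $p\le q+1$, and it follows a different route from the paper.

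\textbf{Your route.} You unpack the recursive definition into an explicit block grammar: blocks $1\,0^{p}$ and $1\,0^{p-1}\,1\,0^{q}$, separated by runs of zeros, with a possibly truncated last block. You read both forbidden families directly off this grammar. You then prove the converse constructively: place the window after a prefix $1\,0^{\ge p+q}$ so that its first $1$ starts a block, and close any truncated block with zeros.

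\textbf{The paper's route.} The paper gets the first family from the minimum-gap result (Theorem~\ref{gap_min}). That result is a numerical comparison of $H_j$ with $H_{k+1}-H_k$, argued for large $k$. The paper justifies the second family by saying such strings have a sum no less than the recurrence. For the converse, it checks that each substring beginning with a $1$ lies lexicographically below the coefficient word $1\,0^{p-1}\,1\,0^{q-1}\,1$. It leaves implicit that some legal decomposition actually realizes the window.

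\textbf{Comparison.} Your version is self-contained and uniform in the indices, and it makes the realizability half of the statement explicit. The paper's version is shorter and ties the patterns to the intuition that an illegal string would dominate the recurrence.

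\textbf{Two small adjustments.} First, the hypothesis $p<q+2$ enters in your \emph{forward} step, not in the converse as you expected. Concluding that no two $1$'s are separated by at most $p-2$ zeros requires a middle $1$ to be followed by at least $q\ge p-1$ zeros. When $p\ge q+2$, a middle $1$ can be followed by a $1$ at distance $q+1\le p-1$; this is exactly why the statement fails there. Your padded converse, by contrast, does not need the hypothesis at all.

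Second, when you show that $1\,0^{p-1}\,1\,0^{b}\,1$ is illegal, you should also rule out the case where its first $1$ is a middle digit rather than a block start. For $p<q$ this is immediate: a middle $1$ is followed by at least $q\ge p$ zeros, so no $1$ can sit exactly $p$ positions after it.
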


	\begin{proof}
		First, we prove every state containing one of the listed patterns is illegal. If a string contains 
		$$1 \ \underbrace{00\dots0}_{a \ 0\text{'s}} \ 1,$$
		where $0 \le a \le p - 2$, then a gap of size less than $p$ occurs. However, by Appendix \ref{mingap}, no legal decomposition can contain a gap of size less than $p$ for all cases $p < q + 2$.
		Altogether there are $p - 1$ such types of illegal strings.
		
		For cases $p<q$, besides the previously described $p - 1$ types of illegal strings, we need to remove another $q-p$ types of strings containing
		$$1 \ \underbrace{00\dots0}_{p - 1 \text{ 0's}} \ 1 \ \underbrace{00\dots0}_{b \text{ 0's}} \ 1,$$
		where $p-1 \le b \le q - 2$. Since the recurrence is a gap of size $p$ followed by a gap of size $q$, then these strings must have a sum no less than the recurrence, thus making these strings illegal.
		Altogether there are $q-1$ types of illegal strings for cases $p<q$.
		
		It remains to prove that no other types of illegal states occur. Suppose that a
		binary state avoids all the forbidden patterns listed above. We show that it is
		legal.
		
		Consider any substring beginning with a digit $1$. The coefficient pattern
		corresponding to the recurrence $H_{n+1}=H_n+H_{n-p}+H_{n-p-q}$
		is
		$$
		1\underbrace{00\dots0}_{p-1\text{ 0's}}1
		\underbrace{00\dots0}_{q-1\text{ 0's}}1.
		$$
		Because the state avoids every pattern
		$$
		1\underbrace{00\dots0}_{a\text{ 0's}}1,
		\qquad 0 \ \leq a \ \leq p-2,
		$$
		there can be no second $1$ among the next $p-1$ positions. Thus, after an
		initial $1$, the state must begin either with
		$$
		1\underbrace{00\dots0}_{p-1\text{ 0's}}
		$$
		or with
		$$
		1\underbrace{00\dots0}_{p\text{ 0's}}.
		$$
		
		In the first case, if $p<q$, the state avoids every pattern
		$$
		1\underbrace{00\dots0}_{p-1\text{ zeros}}1
		\underbrace{00\dots0}_{b\text{ zeros}}1,
		\qquad 0 \ \leq b \ \leq q-2.
		$$
		Therefore, after the second $1$, no further $1$ can occur until at least $q-1$ zeros have appeared. Consequently, the substring must
		begin with
		$$
		1\underbrace{00\dots0}_{p-1\text{ zeros}}1
		\underbrace{00\dots0}_{q-1\text{ zeros}}.
		$$
		Since a state has length $p+q$, the final $1$ of the full recurrence pattern
		does not occur inside the state. Thus the state does not equal or exceed the
		recurrence coefficient pattern and is legal.
		
		If $p=q$ or $p=q+1$, any third $1$ occurring early enough to form gaps sized smaller than $p$ is already excluded by the first family.
		Therefore, in these cases as well, every state avoiding the listed patterns is legal.
		
		Now consider the second case. The substring is strictly smaller than the recurrence coefficient pattern at position $p+1$, since it has a $0$ where the coefficient pattern has a $1$. And the remaining string is a downgraded version of an initial $1$ followed by $p-1$ or $p$ zeros.

		Thus every illegal state contains one of the listed forbidden patterns, and
		the list is complete.
	\end{proof}
	
	\begin{definition} \label{vector}
		Define $v_k$ to be the \textbf{state-count vector} for legal strings of length $k$. Its $j$\textsuperscript{th} component $v_k(S_j)$ records the number of legal strings of length $k$ whose latest added $p+q$ digits form the state $S_j$. Thus
		$$
		v_k \ : = \ 
		\begin{pmatrix}
			v_k(S_1) \\
			v_k(S_2) \\
			\vdots\\
			v_k(S_{\ell})
		\end{pmatrix}.
		$$
	\end{definition}
	Our strings are read from largest index $n$ to smallest index $p+q$, and appending one additional lower-index digit updates the state vector by the transfer matrix. Thus the vector evolves one digit at a time from length $p+q$ to length $n$.
	
	Although we write $v_{p+q}$ to indicate that the initial state
	has length $p+q$, the vector is not determined by the sum $p+q$ alone.
	Its components are indexed by the legal states for the recurrence
	$H_{n+1}=H_n+H_{n-p}+H_{n-p-q}$, and these legal states depend on the ordered pair $(p,q)$. Thus, in general, the vectors corresponding to $(p,q)=(1,3)$, $(2,2)$, and $(3,1)$ are different, even though $p+q=4$ in each case. Likewise, the matrices $M$ also depend on $(p,q)$.
	
	\begin{definition}  \label{matrix}
		We define the \textbf{transfer matrix} $M$ by
		$$
		M_{ab} \ := \
		\begin{cases}
			1, & \text{if state } S_a \text{ can legally follow state } S_b, \\
			0, & \text{otherwise.}
		\end{cases}
		$$
		Thus $v_{k+1}=Mv_k.$
	\end{definition}
	
	\begin{definition} \label{M1M0}
		We define $(M_0)_{ab}$ and $(M_1)_{ab}$ by
		$$
		(M_0)_{ab} \ := \
		\begin{cases}
			1, & \text{if state } S_a \text{ can legally follow state } S_b \text{ and the digit being added is } $0,$\\
			0, & \text{otherwise,}
		\end{cases}
		$$
		and
		$$
		(M_1)_{ab} \ := \
		\begin{cases}
			1, & \text{if state } S_a \text{ can legally follow state } S_b \text{ and the digit being added is } $1,$\\
			0, & \text{otherwise.}
		\end{cases}
		$$
		Then $M_{ab} = (M_0)_{ab} + (M_1)_{ab}$.
	\end{definition}
	
	\begin{lemma} \label{transfer matrix}
		If a legal string contains finitely many fixed local restrictions on its digits, then the number of such legal strings of length $n$ satisfying these restrictions has the asymptotic form
		$$
		C_{p,q,g}r_1^n+O(\rho^n),
		$$
		where $C_{p,q,g}$ is a positive constant depending on the restrictions, $r_1$ is the dominant eigenvalue of the transfer matrix $M$ (i.e., the leading root of the characteristic polynomial of our recurrence), and $0<\rho<r_1$.
		
	\end{lemma}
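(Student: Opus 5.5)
The plan is to write the restricted count as a product of transfer matrices and then apply Perron--Frobenius. A legal string of length $n$ with finitely many fixed local restrictions is counted by
\[
\mathbf{1}^T W_{\mathrm{end}}\, M^{\,n-m}\, W_{\mathrm{mid}}\, M^{\,m'} \cdots v_{p+q}.
\]
Here each restricted position is handled by replacing one factor $M$ with $M_0$ (digit forced to be $0$) or $M_1$ (digit forced to be $1$); $W_{\mathrm{mid}}$ and $W_{\mathrm{end}}$ denote the resulting fixed products of $M_0$, $M_1$, $M$ of bounded length, say $m''$, independent of $n$. The legal states are the ones described in Theorem~\ref{illegal state}, and the legality of consecutive digits is encoded by Definitions~\ref{matrix} and~\ref{M1M0}. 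Since the restrictions are at fixed distance from the ends or from a reference position, exactly one stretch of unrestricted digits has length growing with $n$ (for the gap application, two stretches whose lengths sum to $n-O(1)$; I would treat that case after the basic one).

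The first key step is to show that $M$ is primitive, i.e.\ nonnegative, irreducible and aperiodic, on the set of legal states. Irreducibility I would get from the fact that appending enough $0$'s drives every state to the all-zero state $0\cdots0$. From $0\cdots0$, every legal state can be reached by appending its digits, because a state is legal exactly when it avoids the finite forbidden patterns, and prefixing zeros creates no forbidden pattern. Aperiodicity follows from the self-loop $0\cdots0\to0\cdots0$ via $M_0$. Perron--Frobenius then gives a simple dominant eigenvalue $\lambda>0$ with positive right and left eigenvectors $u,w$, all other eigenvalues of modulus at most some $\rho<\lambda$, and
\[
M^k=\lambda^k P+O(\rho^k),\qquad P=\frac{u w^T}{w^T u}.
\]
This is the Perron projection matrix appearing in Theorem~\ref{gap_small}.

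The second step identifies $\lambda=r_1$. The total number of legal strings of length $k$, namely $\mathbf 1^T M^{k-p-q}v_{p+q}$, equals a sum of the numbers $H_{j+1}-H_j$ by Theorem~\ref{generalzeck}. This count is $\kappa_1' r_1^k+O(\rho^k)$ by the generalized Binet formula and the simplicity of $r_1$ proved in Section~\ref{large gap section}. On the other hand it is $\lambda^k(\mathbf 1^T P v_{p+q})+O(\rho^k)$, and $\mathbf 1^T P v_{p+q}>0$ because $P$ is positive and $v_{p+q}\ge0$ is nonzero. Comparing growth rates forces $\lambda=r_1$.

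Substituting the expansion of $M^k$ into the product formula then gives the count as
\[
r_1^{\,n-m''}\,\mathbf 1^T W_{\mathrm{end}} P W_{\mathrm{mid}}\cdots v_{p+q}+O(\rho^n),
\]
which is of the form $C_{p,q,g}r_1^n+O(\rho^n)$ with a constant independent of $n$. When there are two growing stretches of lengths $k_1+k_2=n-O(1)$, the same expansion yields $r_1^{k_1+k_2}(\cdots)+O(r_1^{\max(k_1,k_2)}\rho^{\min(k_1,k_2)})$. After summing over positions this error is still $o(nr_1^n)$, which is all that Section~\ref{smallgap_section} requires.

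The main obstacle is positivity of $C_{p,q,g}$, together with the proof of primitivity. Positivity of $C_{p,q,g}$ requires $\mathbf 1^T W P W' v>0$. Because $P$ has all entries positive, this reduces to showing that $W^T\mathbf 1$ and $W'v$ are nonzero vectors. That is exactly the statement that the restrictions are realizable by at least one legal string, which I would verify directly by padding with zeros. If the restrictions are contradictory, the constant is $0$; this matches the zero entries in Table~\ref{tab:small-gap-data}, so the lemma should be read with $C_{p,q,g}\ge0$ in that degenerate case. Primitivity depends on the state space being exactly the one characterized in Theorem~\ref{illegal state}, which is where the hypothesis $p<q+2$ enters.
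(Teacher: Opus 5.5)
Your proposal is correct and follows essentially the same route as the paper: you write the restricted count as $\mathbf{1}^T A_1\cdots A_{n-p-q}v_{p+q}$ with a bounded number of factors replaced by $M_0$ or $M_1$, prove primitivity by padding with zeros to reach the all-zero state and then appending the target state's digits, and apply the Perron--Frobenius expansion $M^k=r_1^kP+O(\rho^k)$. Your additional steps are refinements of points the paper's proof only asserts or leaves implicit, and they tighten the argument: you derive $\lambda=r_1$ by comparing growth with $H_{k+1}-H_k=\kappa_1(r_1-1)r_1^k+O(\rho^k)$; you note that $C_{p,q,g}$ is zero rather than positive exactly when the restrictions are unrealizable, as in the zero entries of Table~\ref{tab:small-gap-data}; and you observe that two growing free blocks only give an error $O(r_1^{\max(k_1,k_2)}\rho^{\min(k_1,k_2)})$.
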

	
	\begin{proof}
		If no restrictions are implied (i.e., any summand from $H_1$ to $H_n$ is free to choose), then the number of legal decompositions is simply $\mathbf{1}^T M^{n - p - q} v_{p + q}$.

		When finitely many fixed local restrictions are imposed, we can count such legal decompositions by replacing $M$ by a limited number of $M_0$ or $M_1$. 
		Thus the number of legal strings under those restrictions has the form
		$$\mathbf{1}^T A_1 A_2 \dots A_{n - p - q} v_{p + q},$$
		where each $A_k$ can be either $M$, $M_0$, or $M_1$. Since the number of restriction matrices is fixed and independent of $n$, the total number of unrestricted transition steps is $n + O(1)$.
		
		In order to carry out the remaining analysis, we use the Perron--Frobenius theorem.
		It originated with Perron’s work \cite{Perron} on positive matrices and was later extended by Frobenius \cite{Frobenius1912} to non-negative matrices. Since the original papers are written in German, we refer the reader to \cite{Meyer} for an English version. 
		
		In order to use the Perron--Frobenius theorem, we have to first make sure that $M$ is primitive.\footnote{A matrix is primitive if, after multiplying it by itself enough times, every entry becomes positive.}
		Let $L=p+q$. Each state is a binary string of length $L$. We now show that for any two states $S_a$ and $S_b$, there is a legal path of length $2L$ from $S_a$ to $S_b$.
		
		Start from $S_a$ and append $L$ zeros to the end of the string. This must be legal, since adding zeros cannot create the forbidden pattern involving three $1$'s. After these $L$ steps, the current state is the all-zero state.
		Next append the $L$ digits of $S_b$. Doing this gets us to the state $S_b$. This second part also must be legal, since before the digits of $S_b$ are appended, the previous $L$ digits are all zero, so no forbidden pattern can occur across the boundary between the zeros and $S_b$. Moreover, the forbidden pattern has length $L+1=p+q+1$, while the block $S_b$ has length only $L$, so the forbidden pattern cannot occur entirely inside $S_b$.
		
		Thus, for every pair of states $S_a$ and $S_b$, there is a legal path of length $2L$ from $S_a$ to $S_b$. Therefore every entry of $M^{2L}$ is positive, so $M$ is primitive.
		
		Now we can use the Perron--Frobenius theorem, and get
		\begin{equation}
			M^n \ = \ r_1^nP+O(\rho^n)
		\end{equation}
		for some $\rho<r_1$, where $r_1$ is the dominant eigenvalue\footnote{The eigenvalue $r_1$ is also the largest characteristic root of our recurrence.} and $P$ is the \textbf{Perron projection matrix}. 
		If $u$ and $w$ are the right and left Perron eigenvectors, respectively, then
		\begin{equation}
			Mu \ = \ r_1u, \quad \text{ and} \quad \ w^TM \ = \ r_1w^T,
		\end{equation}
		and
		\begin{equation}
			P \ = \ \frac{uw^T}{w^Tu}.
		\end{equation}
		
		Since the number of restrictive matrices do not depend on $n$, inserting them changes only the leading constant $C$.
		Hence, the total number of legal strings satisfying the fixed restrictions also has the form
		$$C_{p,q,g} r_1^n+O(\rho^n).$$
	\end{proof}

	To count the total number of small gaps of size $g$, we first count the total number of legal decompositions where $H_n, H_i,$ and $H_{i + g}$ are chosen, all terms between $H_i$ and $H_{i+g}$ are not chosen, and all other terms are free to choose. This involves only $2 + (g-1) = g + 1$ fixed restrictions.\footnote{The condition $H_n=1$ is already included in $v_{p+q}$.}
	Building the string from higher index to lower index (i.e., from $H_n$ down to $H_1$), the number of legal decompositions with these restrictions in place are
	\begin{equation} \label{total small g}
		\mathbf{1}^T M^{i - 1} M_1 M_0^{g - 1} M_1 M^{n - p - q - i - g} v_{p+q}.
	\end{equation}
	Note that here $v_{p+q}$ denotes the initial state-count vector for the first $p+q$ highest-indexed digits, with the top digit corresponding to $H_n$ forced to be $1$.
	
	Since the asymptotic estimation from Perron--Frobenius can only be used when the power of $M$ is large, we need to divide the range of $i$ into three regions --- two edge regions $1 \le i \le f(n)$ and $n - p - q - g - f(n) + 1 \le i \le n - g$, as well as a middle region $f(n) + 1 \le i \le n - p - q - g - f(n)$, where $f(n) = \log n$, and only apply Lemma \ref{transfer matrix} to the middle region. We let $M^{i - 1} = M^a$ and $M^{n-p-q-i-g} = M^b$ for simplicity and call them \textbf{free blocks} since no restrictions are imposed in those strings. We can guarantee that $a = i-1$ and $b = n-p-q-i-g$ are both greater than $f(n)$. Note that $f(n) = \log n \to \infty$ as $n \to \infty$, implying that the lengths of both the free blocks approach infinity when $i$ is in the middle region. Then we can apply Perron--Frobenius Theorem on the two free blocks, and get
	\begin{equation}
		M^a \ = \ r_1^aP + O(\rho^a)
	\end{equation}
	and
	\begin{equation}
		M^b \ = \ r_1^bP + O(\rho^b).
	\end{equation}
	Now we substitute $M^a$ and $M^b$ into our counting expression, and the main term is
	\begin{equation}
		r_1^a \cdot r_1^b \ = \ r_1^{a+b} \ = \ r_1^{-p-q-g-1} \cdot r_1^n \ = \ C_{p,q,g} \cdot r_1^n.
	\end{equation}
	The error comes from the cases where at least one term is from the error term of the free blocks, and has the form of
	\begin{equation}
		\rho^a r_1^b 
		\ = \ r_1^{a+b} \left( \frac{\rho}{r_1} \right)^a 
		\ \le \ r_1^{a+b} \left( \frac{\rho}{r_1} \right)^{f(n)}
	\end{equation}
	or
	\begin{equation}
		\rho^b r_1^a \ = \ r_1^{a+b} \left( \frac{\rho}{r_1} \right)^b 
		\ \le \ r_1^{a+b} \left( \frac{\rho}{r_1} \right)^{f(n)}
	\end{equation}
	or
	\begin{equation}
		\rho^a \rho^b \ = \ r_1^{a+b} \left( \frac{\rho}{r_1} \right)^{a+b} 
		\ \le \ r_1^{a+b} \left( \frac{\rho}{r_1} \right)^{f(n)}.
	\end{equation}
	Thus the number of valid decompositions with this gap at position \(i\) has the form
	\begin{equation}
		C_{p,q,g} r_1^n + O\left( r_1^n \left( \frac{\rho}{r_1} \right)^{f(n)} \right)   \ = \ C_{p,q,g} r_1^n + o(r_1^n),
	\end{equation}
	where the constant depends only on the local configuration but not on $n$. Summing over the $n - 2f(n) - O(1) = n-o(n)$ numbers of middle-range values of $i$, we get
	\begin{equation}
		( n-o(n)) (C_{p,q,g} r_1^n + o(r_1^n)) \ = \ C_{p,q,g} n r_1^n + o(n r_1^n).
	\end{equation}
	Since the total number of gaps is
	$$
	D_{p,q} n r_1^n + o(n r_1^n)
	$$
	we obtain
	\begin{equation}
		P_n(g) \ = \ \frac{C_{p,q,g}}{D_{p,q}} + o(1),
	\end{equation}
	and
	\begin{equation}
		P(g) \ = \ \lim_{n \to \infty} P_n(g) \ = \ \frac{C_{p,q,g}}{D_{p,q}} \ = \ K_{p,q,g}.
	\end{equation}
	Thus the expression above gives the middle-region contribution to the probability.
	
	We now prove the insignificance of $i$ in either edge region. Since $p$, $g$, and $q$ are all constants, the total number of edge values of $i$ is bounded by $O(f(n))$.
	For such edge values of $i$, the number of legal decompositions satisfying the gap restrictions is at most the total number of legal decompositions using terms from $\set{H_1, \dots, H_n}$ with $H_n$ chosen. By the uniqueness of legal decompositions, this number is 
	\begin{equation}
		H_{n + 1} - H_n \ = \ H_{n - p} + H_{n - p - q}.
	\end{equation}
	By Binet's Formula we have $H_n$ is asymptotically equal to $\kappa_{1} r_1^n + O(\rho^n)$, where $|\rho| < r_1$.
	From this we get
	\begin{equation}
		H_{n - p} + H_{n - p - q} \ = \ O(r_1^n).
	\end{equation}
	Since there are only $O(f(n))$ edge values that we need to discuss, the total, edge contribution is $O(f(n) \cdot r_1^n)$.
	The total number of gaps is asymptotically equal to $n r_1^{n}$, meaning that
	\begin{equation}
		P_{n,edge}(g) \ = \ \frac{O(f(n) \cdot r_1^n)}{n r_1^n} \ = \ O\left( \frac{\log n}{n} \right) \to 0.
	\end{equation}
	Thus the edge cases do not affect the limiting probability.
	
	We now compute the limiting constant $K_{p, q, g}$.
	Substituting $M_a$ and $M_b$ into \eqref{total small g} and summing over about $n$ valid middle positions, we get
	\begin{equation}
		\begin{split}
			&n\mathbf{1}^T (r_1^aP) M_1 M_0^{g - 1} M_1 (r_1^bP) v_{p + q} + o(nr_1^n) \\
			&\quad = \ n r_1^{n -p - q - g - 1} \mathbf{1}^T P M_1 M_0^{g - 1} M_1 P v_{p + q}+ o(nr_1^n).
		\end{split}
	\end{equation}

	Then dividing by\footnote{We can also express the denominator as $\mathbf{1}^T M^{i - 1} M_1 M^{n - p - q - i} v_{p + q}$, which is asymptotically equal to $r_1^{n-p-q-1} \mathbf{1}^T P M_1 P v_{p + q}$. Then, we can get $P(g) = (r_1^{- g} \mathbf{1}^T P M_1 M_0^{g - 1} M_1 P v_{p + q}) / (\mathbf{1}^T P M_1 P v_{p + q})$. The result obtained is equivalent to that in Equation \eqref{P(g)small}.}
	$$C \kappa_{1} (r_1^{q} + 1) r_1^{-p - q} n r_1^n$$
	gives the limiting constant
	\begin{equation}\label{P(g)small}
		P(g) 
		\ = \ K_{p,q,g}
		\ = \ \frac{r_1^{- g - 1} \mathbf{1}^T P M_1 M_0^{g - 1} M_1 P v_{p + q}}{C \kappa_{1} (r_1^{q} + 1)}.
	\end{equation}
	
	\begin{remark}
		The formula in Theorem \ref{gap_small} contains an explicit factor of $r_1^{-g-1}$. We now determine when the remaining matrix factor is independent of gap size g.
	\end{remark}
	
	\begin{theorem}
		\label{thm:small_gap_decay}
		Assume that $p<q+2$.
		\begin{enumerate}
			\item If $p=q+1$, then
			$$\frac{P(g)}{P(g+1)} \ = \ r_1$$
			for every admissible gap $g \ge p$.
			\item If $p \le q$, then
			$$\frac{P(g)}{P(g+1)} \ = \ r_1$$
			for every gap such that $ p+1 \le g \le q-1$.
		\end{enumerate}
	\end{theorem}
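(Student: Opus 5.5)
From the formula in Theorem \ref{gap_small}, $P(g)$ equals $r_1^{-g-1}$ times the scalar $F(g):=\mathbf{1}^T P M_1 M_0^{g-1} M_1 P v_{p+q}$, up to a factor independent of $g$. Hence $P(g)/P(g+1)=r_1\,F(g)/F(g+1)$, and the theorem reduces to showing $F(g)=F(g+1)$ on the stated ranges. We will prove this combinatorially rather than by manipulating the matrices. Since $P=uw^T/(w^Tu)$ has rank one, $F(g)$ is proportional to $w^T M_1 M_0^{g-1} M_1 u$, with a proportionality constant independent of $g$. Equivalently, we may return to the finite count \eqref{total small g} for a middle position $i$. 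We show that, in the given ranges, the number of legal strings with the pattern $1\,0^{g-1}\,1$ at positions $i+g,\dots,i$ equals, up to an index shift, the number of legal strings with $1\,0^{g}\,1$. The shift accounts for exactly one factor of $r_1$, because the free blocks have lengths $a,b$ with $a+b$ decreasing by one.

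The key step is a local decoupling statement. Fix a legal string and a gap $1\,0^{g-1}\,1$ with $g$ in the range. Legality is determined, via Theorem \ref{illegal state}, by the forbidden windows $1\,0^{a}\,1$ with $a\le p-2$ and, when $p<q$, $1\,0^{p-1}\,1\,0^{b}\,1$ with $p-1\le b\le q-2$, together with the full recurrence pattern. We check which forbidden windows can straddle the gap, that is, contain both gap endpoints.

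\begin{enumerate}
\item Since $g\ge p+1>p-1$, no window of the first family straddles the gap.
\item A window of the second family straddles the gap only if the gap is its $p$-gap, which would need $g=p$, or its $b$-gap, which would need $g-1=b\le q-2$.
\item The full recurrence pattern straddles the gap only if $g=p$ or $g=q$.
\end{enumerate}

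So the only straddling constraint is the second family with $g\le q-1$, and this family is present only when $p<q$.

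This is the step we expect to be the main obstacle, and we would first re-examine it carefully. For the second family, the left endpoint of the gap must be preceded by exactly $1\,0^{p-1}$. For $g$ in the range $p+1\le g\le q-1$, we would try to prove that such a window, if it existed, would already be forbidden. We would then verify the boundary cases against the numerical data in Table \ref{tab:small-gap-data}: for example $(p,q)=(1,4)$ with $g=2,3$, where $P(2)/P(3)\approx 1.6736=r_1$, consistent with the claim. When $p=q+1$, the second family is absent. Moreover every $g\ge p$ exceeds $q$ and every gap is at least $p$ (Appendix \ref{mingap}), so no constraint straddles any gap of size $g\ge p$. This gives the first item.

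Once decoupling holds, the count of legal strings with the gap factors as (left legal block ending in $1$) times (right legal block starting with $1$). Lengthening the gap by one zero just shifts the right block down by one index, which by Lemma \ref{transfer matrix} rescales its count by $r_1^{-1}(1+o(1))$. In matrix language, $M_1 M_0^{g} M_1$ and $M_1 M_0^{g-1} M_1$ factor through the all-zero state in the same way, so $F(g+1)=F(g)$. Summing over middle $i$ exactly as in the proof of Theorem \ref{gap_small} then gives $P(g)/P(g+1)=r_1$.
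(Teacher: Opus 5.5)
Part 2 has a genuine gap, at exactly the step you flagged; your treatment of part 1 ($p=q+1$) is fine and is essentially the paper's argument.

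\textbf{Where it fails.} Your item 2 shows that for every $g$ with $p+1\le g\le q-1$ the window $1\,0^{p-1}\,1\,0^{g-1}\,1$ (with $b=g-1\in[p,q-2]$) straddles the gap. This constraint is not redundant. A left block ending in $1\,0^{p-1}\,1$ at $H_{i+g+p},H_{i+g}$ is legal on its own, and a right block starting at $H_i$ is legal on its own, yet their concatenation is illegal. So the statement you hope to prove, that such a window is already excluded by the legality of the two sides separately, is false. The factorization into (all left blocks ending in $1$) $\times$ (all right blocks starting with $1$) therefore fails whenever $p\le q-2$.

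If that factorization held, $P(g)$ would coincide with the large-gap formula $K_{p,q}r_1^{-g}$ on this range. For $(p,q)=(1,4)$ and $g=3$ it would predict $P(3)=P(5)\,r_1^{2}\approx 0.306$, whereas Table \ref{tab:small-gap-data} gives $P(3)\approx 0.192$. Your matrix remark fails for the same reason. A run of $g-1\le q-2<p+q$ zeros never reaches the all-zero state, so the state after $M_0^{g-1}$ still records whether $H_{i+g}$ was preceded by a gap of size $p$.

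\textbf{The missing idea.} The coupling does not need to vanish; it only needs to be the same for $g$ and $g+1$. Combine your items 2 and 3. For every $g\in[p+1,q]$ the only straddling restriction is one and the same condition: the gap immediately preceding $H_{i+g}$ must not have size exactly $p$. It comes from the second family when $g\le q-1$ and from the full recurrence pattern when $g=q$. Since $g\ne p$, nothing constrains the right block beyond its own legality.

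Hence the count at position $i$ is (left blocks ending at $H_{i+g}$ whose last gap is not $p$) $\times$ (right blocks starting at $H_i$). The left factor is the same for $g$ and $g+1$ as long as both lie in $[p+1,q]$, i.e.\ $p+1\le g\le q-1$. Your index-shift argument then gives $F(g)=F(g+1)$. This is exactly the paper's split into Case 1 (preceding gap $p$, contributing zero for every $g\le q$) and Case 2 (preceding gap $>p$, unaffected by inserting a zero).

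It also explains the boundary. At $g+1=q+1$ the restriction disappears and the left factor changes, so the ratio at $g=q$ is not $r_1$, as Table \ref{tab:small_gap_decay} shows. Note that your summary ``the only straddling constraint is the second family with $g\le q-1$'' drops the $g=q$ case of item 3. You need that case to handle the endpoint $g=q-1$.
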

	
	\begin{proof}
		When $p=q+1$, by Theorem \ref{mingap}, the minimum gap size is $p$. Therefore, a gap of size $q$ cannot occur in any legal decomposition, since $q < p$. The only possible interaction between the two sides of a forced gap would require the recurrence pattern, whose consecutive gaps are $p$ and $q$, to cross the forced gap. Since a gap of size $q$ is impossible, such an interaction cannot occur. Thus, the left- and right-hand sides are independent not only for $g>p$, but for every gap $g \ge p$. Consequently, the large-gap argument applies starting at $g = p$.
		
		Now let us consider the $p \le q$ case. If $p = q$ or $p = q -1$, then there is no integer $g$ satisfying $p+1 \le g \le q-1$. Thus, it remains only to consider the case $p \le q-2$.
		
		Define
		$$S(g) \ := \ \mathbf{1}^{T} P M_1 M_0^{g-1} M_1 P v_{p + q}.$$
		The quantity $S(g)$ represents the contribution of legal configurations containing two consecutive chosen summands separated by a gap of size $g$, which is of binary pattern
		$$1 0^{g - 1} 1.$$
		
		By Theorem \ref{gap_small}, we get
		$$P(g) \ = \ \frac{r_1^{-g-1}}{C \kappa_{1} (r_1^{q} + 1)} S(g),$$
		and hence
		$$\frac{P(g)}{P(g + 1)} \ = \ r_1 \frac{S(g)}{S(g + 1)}.$$
		Thus, it suffices to prove that $S(g) = S(g + 1)$ for all $p + 1 \le g \le q - 1$.
		
		\vspace{1em}
		
		\noindent
		\textbf{Case 1: The gap immediately preceding $H_{i + g}$ has size $p$.}
		
		In this case, the three consecutive summands have gap pattern
		$$1 0^{p - 1} 1 0^{g - 1} 1.$$ If $g \le q$, this is an illegal pattern described by Theorem \ref{illegal state}.
		Hence, no legal configuration with preceding gap $p$ contributes to $S(g)$ for $g \le q$. However, when $g = q + 1$, this forbidden pattern no longer occurs, meaning that this case may contribute to $S(q + 1)$. This explains why $S(q) \neq S(q+1)$ and the boundary ratio cannot be $r_1$.
		
		\vspace{1em}
		
		\noindent
		\textbf{Case 2: The gap immediately preceding $H_{i + g}$ has size larger than $p$}
		
		In this case, the local configuration has the form
		$$1 0^{d - 1} 1 0^{g - 1} 1, \qquad d \ > \ p.$$
		Inserting one additional zero between the two forced summands
		$$1 0^{d - 1} 1 0^{g - 1} 1 \ \longrightarrow \ 1 0^{d - 1} 1 0^{g} 1$$
		changes the forced gap from $g$ to $g + 1$, leaving the surrounding configuration unchanged and therefore preserving legality. Thus, the contribution of this case is independent of $g$, so we have
		$$S(g) \ = \ S(g + 1), \qquad p + 1 \ \le \ g \ \le \ q - 1.$$
		Equivalently,
		$$\frac{P(g)}{P(g + 1)} \ = \ r_1, \qquad p + 1 \ \le \ g \ \le \ q - 1.$$
		
		Note that when forced gap has size $p$, the configuration on one side of the gap may interact with the configuration on the other side through the recurrence pattern. This interaction disappears once the forced gap has size at least $p+1$. Therefore, $S(p) \neq S(p + 1)$.
		
		All in all, this proves that the non-boundary small-gap limiting probabilities decay geometrically with common ratio $r_1$ throughout the interval from $p + 1$ to $q$.
	\end{proof}
	
	Table \ref{tab:small_gap_decay} illustrates the geometric decay descibed in Theorem \ref{thm:small_gap_decay} for several choices of $p$ and $q$. In the ranges covered by the theorem, the numerical ratios $P(g)/P(g + 1)$ agree with the dominant root $r_1$, while the remaining values occur at the exceptional boundary gaps.
	
	\begin{table}[!ht]
		\centering
		\footnotesize
		\renewcommand{\arraystretch}{0.85}
		\setlength{\tabcolsep}{5pt}
		\begin{tabular}{cccccc}
			\toprule
			$p$ & $q$ & $g$ & $r_1$ & $P(g)$ & $P(g)/P(g+1)$ \\
			\midrule
			
			2 & 1 & 1 & 1.618033989 & 0           & 0           \\
			&   & 2 &             & 0.381966011 & 1.618033988 \\
			&   & 3 &             & 0.236067978 & 1.618033989 \\
			&   & 4 &             & 0.145898034 &             \\
			\midrule
			
			1 & 3 & 1 & 1.704902776 & 0.143768434 & 0.488058121 \\
			&   & 2 &             & 0.294572363 & 1.704902776 \\
			&   & 3 &             & 0.172779567 & 1.074601831 \\
			&   & 4 &             & 0.160784731 & 1.704902776 \\
			&   & 5 &             & 0.094307273 &             \\
			\midrule
			
			1 & 4 & 1 & 1.673648546 & 0.101562455 & 0.316645871 \\
			&   & 2 &             & 0.320744607 & 1.673648546 \\
			&   & 3 &             & 0.191643944 & 1.673648546 \\
			&   & 4 &             & 0.114506683 & 1.047669284 \\
			&   & 5 &             & 0.109296593 & 1.673648546 \\
			&   & 6 &             & 0.065304387 &             \\
			\midrule
			
			3 & 2 & 1 & 1.465571232 & 0           &             \\
			&   & 2 &             & 0           & 0           \\
			&   & 3 &             & 0.317672196 & 1.465571232 \\
			&   & 4 &             & 0.216756572 & 1.465571232 \\
			&   & 5 &             & 0.147899036 &             \\
			\midrule
			
			1 & 5 & 1 & 1.653634587 & 0.069613145 & 0.204600545 \\
			&   & 2 &             & 0.340239296 & 1.653634587 \\
			&   & 3 &             & 0.205752407 & 1.653634587 \\
			&   & 4 &             & 0.124424349 & 1.653634587 \\
			&   & 5 &             & 0.075242953 & 1.030476223 \\
			&   & 6 &             & 0.073017650 & 1.653634587 \\
			&   & 7 &             & 0.044155856 &             \\
			\midrule
			
			2 & 4 & 1 & 1.516784253 & 0           &             \\
			&   & 2 &             & 0.143271629 & 0.554522656 \\
			&   & 3 &             & 0.258369298 & 1.516784253 \\
			&   & 4 &             & 0.170340177 & 1.104677708 \\
			&   & 5 &             & 0.154198981 & 1.516784253 \\
			&   & 6 &             & 0.101661776 &             \\
			
			\bottomrule
		\end{tabular}
		\caption{Geometric decay among small-gap probabilities.}
		\label{tab:small_gap_decay}
	\end{table}
	
	\appendix
	
	\section{Minimum gap size for PLRS $H_{n + 1} = H_n + H_{n - p} + H_{n - p - q}$}\label{mingap}
	
	\begin{theorem}\label{gap_min}
		Let there be a recurrence $H_{n+1}=H_n+H_{n-p}+H_{n-p-q}$, with initial conditions to match a PLRS.
		The \textbf{minimum gap size} for $p \le q + 1$ is $p$. When $p \ge q + 2$, then a gap of size $p - 1$ is allowed.
	\end{theorem}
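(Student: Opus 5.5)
We work directly with the legality rule for the coefficient string $(c_1,\dots,c_L)=(1,0^{p-1},1,0^{q-1},1)$, where $L=p+q+1$. A 0/1 digit string read from the top index is legal exactly when it agrees with this coefficient pattern on an initial segment and then falls strictly below it. After that it may have zeros, and then a legal block starts again. Everything below is a case analysis of how a block can begin. The statement has two parts: (i) when $p\le q+1$, every gap is at least $p$; (ii) when $p\ge q+2$, some legal decomposition has a gap of exactly $p-1$.

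For (i), take two consecutive chosen summands with gap $g$, the upper one at position $s$ in its current block. The upper summand cannot be the $s=p+q+1$ digit, because matching the full pattern is never legal. The first case is $s=1$: the block begins with a $1$. Then the pattern forces the next $p-1$ digits to match or drop below. They match as zeros, so the next $1$ cannot come before offset $p$, giving $g\ge p$. The other possibility here is that the block ends at this digit with $a_1=c_1=1$ and then "$a_s<c_s$" happens at a zero position; then a new block could restart immediately. The key point to check is this: dropping below the pattern at a position where $c_j=0$ is impossible, since digits are nonnegative. So the first drop must happen at a position $j$ with $c_j=1$, namely $j=1$, $p+1$ or $p+q+1$. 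Consequently, after a block starting with $1$, the earliest moment a fresh block may begin is after position $p+1$ (with $a_{p+1}=0$), and that again yields $g\ge p$. The second case is $s=p+1$: the upper summand is the second $1$ of the pattern. The next $1$ then comes either at offset $q$, as the third pattern digit, which is illegal as the full match, or after a drop at position $p+q+1$. In the latter case the next block can begin at offset $q+1$ at the earliest, giving $g\ge q+1\ge p$ precisely because $p\le q+1$. Taking the lower summand to start its new block right after a drop realizes $g=p$ (e.g., the string $10^{p-1}1$ inside a single block), so the minimum equals $p$.

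For (ii), when $p\ge q+2$ we exhibit a legal block $1\,0^{p-1}\,1\,0^{q-1}\,0$, ending with a drop at position $p+q+1$, immediately followed by a new block starting with $1$. The gap between the $1$ at position $p+1$ and the new $1$ is $q+1\le p-1$. We then pad appropriately to get exactly $p-1$: either $q+1=p-1$ directly, or we insert $p-2-q$ zeros as the allowed $\ell$ zeros before the next block. We also check that the top summand $H_n$ and the indices fit, which is fine for large $n$.

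The main obstacle is making the claim "a drop can only occur at a position with $c_j=1$" and the block-restart bookkeeping fully rigorous. In particular, the first case of (i) must also cover blocks that terminate early via the $m<L$ clause near the bottom. I would handle this by induction on the recursive definition of legality, tracking the position within the current block.
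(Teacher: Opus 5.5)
Your proof is correct, and it takes a genuinely different route from the paper's.

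\textbf{The paper's route.} The paper never looks at the block structure of the legality rule. It argues by size: the summand $H_j$ following $H_k$ must satisfy $H_j<D_k:=H_{k+1}-H_k=H_{k-p}+H_{k-p-q}$. It then compares $D_k$ with $H_{k-p+1}$ for large $k$ in three cases:
\begin{itemize}
\item $p\le q$: expand $H_{k-p+1}$ once more;
\item $p=q+1$: use the inductive identity $H_{k+1}-H_k=H_{k-p+1}$;
\item $q\le p-2$: show $D_k>H_{k-p+1}$.
\end{itemize}

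\textbf{Your route.} You read the gaps directly off the definition. Every $1$ sits at pattern position $1$ or $p+1$ of its block, and drops occur only where $c_j=1$. So the attainable gaps are exactly $p$, $p+1+\ell$ and $q+1+\ell$ with $\ell\ge 0$, and the minimum gap is $\min(p,q+1)$.

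\textbf{What your route buys.}
\begin{itemize}
\item It is exact at every index: there is no ``$k\to\infty$'', and initial terms are included.
\item It needs neither the size comparisons nor the induction lemma.
\item It gives the sharper fact that for $p\ge q+2$ the minimum gap is $q+1$.
\item Most importantly, it uses only the paper's stated definition of legality.
\end{itemize}

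\textbf{The legal-versus-greedy issue.} The paper's criterion $H_j<D_k$ tacitly applies ``total value $<H_{k+1}$'' to the tail beginning at an interior summand $H_k$. In other words, it identifies legal decompositions with greedy ones. That is harmless when $p\le q+1$, because there every suffix of a legal string that begins with a $1$ is again legal. It fails when $p\ge q+2$.

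Take $p=3$, $q=1$. Then $H_1,\dots,H_7=1,2,3,4,6,9,14$. The integer $13$ has legal decomposition $H_6+H_3+H_1$, with string $100101$: it matches $1001$, drops at position $5$, then starts a new block. Greedy instead gives $H_6+H_4$, whose string $101000$ is illegal since $c_3=0$.

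So in that regime the paper's configuration $H_k$ followed directly by $H_{k-p+1}$ is not legal under its own definition. The gap of size $p-1$ really arises as in your construction, after the second $1$ of a block. In the example it sits between $H_3$ and $H_1$, even among initial-condition terms.

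\textbf{What the paper's route buys.} It ties the threshold directly to the growth of the sequence ($D_k$ versus $H_{k-p+1}$), which is natural from the greedy viewpoint.

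\textbf{One small slip.} Starting a new block right after a drop gives a gap of $p+1$ or $q+1$, not $p$. The gap $p$ is realized by your example $1\,0^{p-1}\,1$ inside a single block, so that example is what you should cite for attainment.
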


	\begin{lemma}\label{induction}
		When $p = q + 1$, we have $H_{k + 1} - H_k = H_{k - p + 1}$.
	\end{lemma}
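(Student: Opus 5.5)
The plan is to introduce the defect
$$D_k \ := \ H_{k+1}-H_k-H_{k-p+1}, \qquad k \ \ge \ p,$$
and to show that $D_k=0$ for all $k\ge p$, which is exactly the claim. The main step is a self-similarity relation $D_k=-D_{k-p}$ obtained from the recurrence. This reduces the lemma to checking $p$ consecutive base cases, $p\le k\le 2p-1$, which can be read off from the PLRS initial conditions.

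\textbf{Reduction step.} Here $L=p+q+1=2p$, so the recurrence $H_{k+1}=H_k+H_{k-p}+H_{k-p-q}$ holds for $k\ge 2p$. Since $q=p-1$, we have $H_{k-p-q}=H_{k-2p+1}$, and so
$$D_k \ = \ H_{k-p}+H_{k-2p+1}-H_{k-p+1} \ = \ -\bigl(H_{k-p+1}-H_{k-p}-H_{k-2p+1}\bigr) \ = \ -D_{k-p}.$$
For $k\ge 2p$ the index $k-p$ is at least $p$, so $D_{k-p}$ is again in the defined range. Strong induction on $k$ then gives $D_k=0$ for all $k\ge p$, once the base cases are in hand.

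\textbf{Base cases.} The only nonzero coefficients are $c_1=c_p=c_{p+q}=1$. Using the PLRS initial conditions $H_{n+1}=c_1H_n+\dots+c_nH_1+1$ for $n<L$:
\begin{itemize}
\item For $1\le j\le p-1$ only $c_1$ contributes, so $H_{j+1}=H_j+1$. Together with $H_1=1$, this says $H_{j+1}=H_j+1$ for $0\le j\le p-1$ if we set $H_0:=0$. In particular $H_j=j$ for $1\le j\le p$.
\item For $p\le k\le 2p-1=p+q$, the coefficient $c_p$ contributes the term $H_{k+1-p}$. The coefficient $c_{p+q}$ would only enter for $k\ge p+q$ and is absent for $k<p+q$.
\end{itemize}

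Reading the initial condition carefully, the $c_p$ term is $H_{n+1-p}$ with $n=k$, giving $H_{k+1}=H_k+H_{k-p+1}+1$ for $p\le k<p+q$. That would make $D_k=1$, not $0$, which contradicts the claim. So the indexing must be pinned down against the paper's convention, namely the coefficient of $H_{n-p}$ in the stated recurrence $H_{n+1}=H_n+H_{n-p}+H_{n-p-q}$. In that convention the middle coefficient sits at position $p+1$, so $L=p+q+1$ and the nonzero coefficients are $c_1=c_{p+1}=c_{p+q+1}=1$.

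With this convention the base cases read as follows:
\begin{itemize}
\item For $1\le k\le p$: $H_{k+1}=H_k+1$.
\item For $p+1\le k\le p+q=2p-1$: $H_{k+1}=H_k+H_{k-p}+1$.
\end{itemize}
The recurrence $D_k=-D_{k-p}$ is valid for $k\ge p+q+1=2p$, so the base range is $p\le k\le 2p-1$:
\begin{itemize}
\item At $k=p$: $H_{p+1}-H_p=1=H_1$.
\item For $p+1\le k\le 2p-1$: $H_{k+1}-H_k=H_{k-p}+1=H_{k-p+1}$. This uses the first bullet, since $1\le k-p\le p-1$.
\end{itemize}
Hence $D_k=0$ on the base range, and the induction closes.

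\textbf{Expected obstacle.} The only real difficulty is this bookkeeping of the initial conditions: identifying which $c_i$ are nonzero in the paper's indexing, and confirming that the base range $p\le k\le 2p-1$ has exactly $p$ consecutive values, which is what the step $D_k=-D_{k-p}$ needs. The reduction step itself is a one-line algebraic identity.
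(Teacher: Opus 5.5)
Your proof is correct and is essentially the paper's argument: your identity $D_k=-D_{k-p}$ for $k\ge 2p$ is exactly the paper's inductive step $k\to k+p$ via the recurrence with $q=p-1$. Your base case is in fact more complete than the paper's. The paper checks only $k=p$, which on its own reaches only the multiples of $p$, whereas you verify every $k$ with $p\le k\le 2p-1$ using $H_{k+1}=H_k+1$ for $1\le k\le p$ and $H_{k+1}=H_k+H_{k-p}+1$ for $p+1\le k\le 2p-1$. In the write-up, drop the false start with $c_1=c_p=c_{p+q}=1$, which contradicts your own $L=p+q+1$, and state the correct indexing $c_1=c_{p+1}=c_{p+q+1}=1$ from the outset.
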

	
	\begin{proof}
		This is proved by induction.
		
		\vspace{1em}
		
		\textbf{Base Case:} 
		Take $k = p$.
		Then the statement becomes  $
		H_{p + 1} - H_{p} \ = \ H_1 \ = \ 1,
		$
		which by the initial conditions is true. Thus the base case holds.
		
		\vspace{1em}
		
		\textbf{Inductive Step:} We assume that $H_{k + 1} - H_k = H_{k - p + 1}$ is true for all $k$, and we need to prove that it is also true for $k + p$ (i.e., $H_{k + p + 1} - H_{k + p} = H_{k + 1}$).
		
		Starting from the recurrence at index $k + p$, since $q = p - 1$, we get
		\begin{equation}
			H_{k + p + 1} \ = \ H_{k + p} + H_{k} + H_{k - p + 1}.
		\end{equation}
		After simplification, we have
		\begin{equation}
			H_{k + p + 1} - H_{k + p} \ = \ H_{k} + H_{k - p + 1}.
		\end{equation}
		Rearranging our assumption gives that,
		\begin{equation}
			H_{k + 1} \ = \ H_{k} + H_{k - p + 1}.
		\end{equation}
		Thus the statement holds for $k + p$, since $ H_{k + p + 1} - H_{k + p} = H_{k} + H_{k - p + 1} = H_{k + 1}$, completing the proof.
	\end{proof}
	
	\begin{proof}
		We now prove Theorem \ref{gap_min}.
		Suppose a legal decomposition contains \(H_k\), and suppose the next summand is \(H_j\), where \(j<k\). Since the total value of a decomposition with largest summand \(H_k\) must be less than \(H_{k+1}\), we must have $ H_k+H_j < H_{k+1} $, or equivalently, $ H_j < H_{k+1}-H_k. $
		Setting $ D_k := H_{k+1}-H_k $, thus we know that the smallest possible gap after \(H_k\) is determined by the largest index \(j<k\) for which $ H_j < D_k. $
		
		We now compute \(D_k\). For \(k \to \infty\), the recurrence gives $ H_{k+1} = H_k+H_{k-p}+H_{k-p-q}$, so
		\begin{equation}
			D_k \ = \ H_{k-p}+H_{k-p-q}.
		\end{equation}
		Since we want to figure out whether a gap of $p-1$ can occur, now the question is whether $H_j$ can equal $H_{k-(p-1)} = H_{k-p+1}$ or not. This is equivalent to finding whether \(H_{k-p+1}\) is less than, equal to, or greater than \(D_k\).
		If $H_{k-p+1} < D_k$, then a gap of size $p-1$ is allowed; otherwise, the minimum gap size is no less than $p$.
		To do this, we have to analyze three cases.
		
		\subsubsection{Case 1: $p \le q$}
		
		If $p \le q$, then $ k - p - q \le k - 2p. $
		So $H_{k - p - q}$ is not large enough to cause $D_k$ to be larger than $H_{k - p + 1}$. In fact,
		\begin{equation}
			D_k \ = \ H_{k - p} + H_{k - p - q} \ \le \ H_{k - p} + H_{k - 2p},
		\end{equation}
		but by the construction of the sequence,
		\begin{equation}
			H_{k - p + 1} \ = \ H_{k - p} + H_{k - 2p} + H_{k - 2p - q} \ > \ H_{k - p} + H_{k - 2p}
		\end{equation}
		for $k \to \infty$.
		
		Thus, $D_k < H_{k - p + 1}$ meaning that a gap of size $p - 1$ is impossible.
		
		Since $H_{k - p} < H_{k - p} + H_{k - p - q} = D_k$ is always true, we have $H_{k - p} < D_k$ for all cases, meaning that gap of size $p$ can be reached for all cases and the minimum gap size is $p$ in this case.
		
		\subsubsection{Case 2: $p = q + 1$}
		
		By definition, 
		\begin{equation}
			D_k \ = \ H_{k - p} + H_{k - p - q} \ = \ H_{k - p} + H_{k - 2p + 1}.
		\end{equation}
		By Lemma \ref{induction}, we have
		\begin{equation}
			D_k \ = \ H_{k + 1} - H_k\ = \ H_{k - p + 1}.
		\end{equation}
		Thus $H_{k - p + 1}$ is still not allowed, implying that a gap of size $p - 1$ is impossible and the minimum gap size is still $p$.
		
		\subsubsection{Case 3: $q \le p - 2$}
		
		Now $q$ is smaller, so $k - p - q$ is larger, then we have $ k - p - q \ge k - 2p + 2. $
		So $H_{k - p - q}$ is large enough to push $D_k$ past $H_{k - p + 1}$, meaning that $ D_k = H_{k - p} + H_{k - p - q} > H_{k - p + 1}. $ 
		Thus, a gap of size $p - 1$ is allowed after $H_k$.
	\end{proof}
	
	%%%%%%%%%%%%%%%%%%%%%%%%%%%%%%%%%%%%%%%%%%%%%%%%%%%%%%%%%%%%%%%%%%%%%%%%%%%%%%%%%%%%%%%%%%%%%%%%%%%%%%%%%%%%%%%%%%%%%%%%%%%%%%%%%%%%%%

	\section{Limitations of the Small-Gap Theorem for $p \ge q+2$} \label{fail}
	In this section we explain why Theorem \ref{gap_small} fails when $p \ge q + 2$. The main issue is that we cannot find an explicit rule that defines illegal states, thus we are incapable of listing legal states. An explicit legal state rule is essential, since our transfer matrix $M$ and initial vector $v$ are both formed using all legal states. Recall cases when $p<q+2$. We first list all the $(p+q)$-digit binary strings, construct rules for illegal strings in Theorem \ref{illegal state}, and remove all the $(p+q)$-digit binary strings containing illegal patterns, leaving all the legal states to form $M$ and $v$. The matrices formed in this way have two crucial features.
	\begin{itemize}
		\item The Perron eigenvalue $\lambda$ of the transfer matrix agrees with the dominant root $r_1$ of the recurrence (see data in Section \ref{intro}), meaning our transfer matrix perfectly describes the growth rate of the sequence.
		\item This matrix also records enough information to count gaps.
	\end{itemize}
	
	As for the $p \ge q + 2$ cases, \textbf{the illegal state rule is not uniform across the whole sequence}.
	\begin{theorem}
		For $p \ge q + 2$, a gap of size $p-1$ is forbidden among the initial-condition terms, but can appear later after the recurrence has begun.
	\end{theorem}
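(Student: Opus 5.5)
The plan is to make both claims explicit by computing the small values of $H_n$ from the PLRS initial conditions, and to use the criterion from the proof of Theorem \ref{gap_min}. That criterion says a gap of size $p-1$ after a summand $H_k$ is possible exactly when
$$H_{k-p+1} \ < \ D_k \ := \ H_{k+1}-H_k .$$
Sufficiency holds because if $H_k+H_{k-p+1}<H_{k+1}$, then the integer $N=H_k+H_{k-p+1}$ lies in $[H_k,H_{k+1})$. By Theorem \ref{generalzeck} and the greedy algorithm, its legal decomposition must be exactly $H_k+H_{k-p+1}$. Necessity holds because any legal decomposition with top summand $H_k$ has value less than $H_{k+1}$.

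\textbf{Step 1 (initial values).} Here $L=p+q+1$, with $c_1=c_{p+1}=c_{p+q+1}=1$ and all other $c_j=0$. The initial rule gives $H_{k+1}=H_k+1$ for $k\le p$, so $H_k=k$ for $1\le k\le p+1$. For $p+1\le k\le p+q$ it gives $H_{k+1}=H_k+H_{k-p}+1$. Hence
$$D_k=1 \quad (k\le p), \qquad D_k=H_{k-p}+1=k-p+1 \quad (p+1\le k\le p+q).$$
The second formula uses $k-p\le q\le p+1$.

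\textbf{Step 2 (forbidden in the initial range).} For $k<p$ the index $k-p+1$ is nonpositive, so no such gap exists. For $k=p$ we have $H_1=1=D_p$, which is not strictly less, so the gap is forbidden. For $p+1\le k\le p+q$ we have $H_{k-p+1}=k-p+1=D_k$, because $k-p+1\le q+1\le p+1$; again the inequality fails. So among the initial-condition terms $H_1,\dots,H_{p+q+1}$, no gap of size $p-1$ occurs. I would also check $k=p+q+1$, the first index at which the full recurrence is used. There $D_k=H_{q+1}+H_1=q+2$ and $H_{q+2}=q+2$, since $q+2\le p$; this is still equality, so the gap is still forbidden.

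\textbf{Step 3 (allowed later).} Take $k=p+q+2$, which is governed by the recurrence. Then
$$D_k=H_{q+2}+H_2=(q+2)+2=q+4,$$
while $H_{k-p+1}=H_{q+3}=q+3$. This last equality is exactly where the hypothesis $p\ge q+2$ enters, since it guarantees $q+3\le p+1$. Since $q+3<q+4$, the integer $N=H_{p+q+2}+H_{q+3}$ has legal decomposition with consecutive summands at distance $p-1$.

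The main care point is bookkeeping: confirming that each index used lies in the range where $H_m=m$ holds, and that the strict versus non-strict inequality is interpreted correctly. A tie $H_{k-p+1}=D_k$ forces the greedy algorithm to use $H_{k+1}$ instead, so it counts as forbidden. For larger $k$ I do not need a general argument, since a single later witness suffices for the statement. One could also note that for large $k$ Case 3 of Theorem \ref{gap_min} already gives the same conclusion.
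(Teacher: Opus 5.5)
Your proof is correct in the framework the paper works in, and its first half is the paper's argument. The paper verifies $H_k+H_{k+p-1}=H_{k+p}$ for $1\le k\le q+1$ from $H_m=m$, which is exactly your equality $D_k=H_{k-p+1}$ for $p\le k\le p+q$. You also handle top index $p+q+1$, where $H_{p+q+1}+H_{q+2}=H_{p+q+2}$. Since $H_{p+q+1}$ is itself an initial-condition term, this closes a case the paper's list stops short of.

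The second half takes a genuinely different route. The paper just cites Case~3 of Theorem~\ref{gap_min}, which argues only qualitatively, for large $k$, that $H_{k-p-q}$ ``is large enough to push $D_k$ past $H_{k-p+1}$.'' You instead give an exact witness at the first recurrence-generated index $k=p+q+2$: $D_k=H_{q+2}+H_2=q+4>q+3=H_{q+3}$. You also correctly isolate $q+3\le p+1$ as the point where $p\ge q+2$ enters. Your version is fully checkable and locates the transition exactly: equality through top index $p+q+1$, strict inequality at $p+q+2$. The paper's route, once made rigorous, would give the stronger fact that the gap is available after every sufficiently large $H_k$, but the statement does not need that.

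One caution applies equally to the paper. Theorem~\ref{generalzeck} gives only existence and uniqueness of legal decompositions, so it does not imply that the legal decomposition of $N=H_k+H_{k-p+1}$ is the greedy one. For $p\ge q+2$ that identification actually fails under the paper's Definition of legality. Take $(p,q)=(3,1)$, so $H=1,2,3,4,6,9,14,\dots$. Greedy gives $13=H_6+H_4$, but the digit string $101000$ is not legal: after $a_1=c_1=1$, the lexicographic condition forces $a_2=a_3=0$ because $c_2=c_3=0$. The legal decomposition of $13$ is $H_6+H_3+H_1$, which even contains a gap of size $p-1$ between two initial-condition terms. So say explicitly that you are working with greedy decompositions, as the paper does. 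In that reading both your sufficiency step and your necessity step (applied to the tail of a greedy decomposition at any summand $H_k$) are sound.
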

	The latter part of the theorem has been proved in Section \ref{mingap}. Now we only need to prove the former part is true. It suffices to prove that for all $ 1 \le k \le q+1$, we have $H_k + H_{k+p-1} = H_{k+p}$. This is obvious, since our initial conditions are
	\begin{align*}
		H_1       &\ = \ 1 \\
		H_2       &\ = \ 2 \\
		&\ \  \vdots \\
		H_{p+1}   &\ = \ p+1 \ = \ H_p + H_1
		\quad (k \ = \ 1)\\
		H_{p+2}   &\ = \ H_{p+1} + H_1 +1
		\ = \ H_{p+1} + H_2
		\quad (k \ = \ 2)\\
		H_{p+3}   &\ = \ H_{p+2} + H_2 +1
		\ = \ H_{p+2} + H_3
		\quad (k \ = \ 3)\\
		&\ \  \vdots \\
		H_{p+q+1} &\ = \ H_{p+q} + H_q +1
		\ = \ H_{p+q} + H_{q+1}
		\quad (k \ = \ q-1).
	\end{align*}
	Thus, strings containing patterns
	$$1\underbrace{00\dots 0}_{p-2 \ 0\text{'s}}1$$
	are difficult to be sorted as legal or illegal states, leading to the limitation of the method.

	%%%%%%%%%%%%%%%%%%%%%%%%%%%%%%%%%%%%%%%%%%%%%%%%%%%%%%%%%%%%%%%%%%%%%%%%%%%%%%%%%%%%%%%

	\section{Small Gap Counting Examples} \label{small gap example}
	
	\subsection{$(p,q,n)=(1,2,8)$}
	We first use the recurrence $ H_{n+1}=H_n+H_{n-1}+H_{n-3}$ as an example to check our small gap counting method. We know the initial conditions of this recurrence are
	$$
	H_1=1,\qquad H_2=2,\qquad H_3=4,\qquad H_4=7.
	$$
	Using the recurrence, we get the next few terms
	$$
	\begin{aligned}
		H_5 &\ =\ H_4+H_3+H_1
		&&\ =\ 7+4+1
		&&\ =\ 12,\\
		H_6 &\ =\ H_5+H_4+H_2
		&&\ =\ 12+7+2
		&&\ =\ 21,\\
		H_7 &\ =\ H_6+H_5+H_3
		&&\ =\ 21+12+4
		&&\ =\ 37,\\
		H_8 &\ =\ H_7+H_6+H_4
		&&\ =\ 37+21+7
		&&\ =\ 65,\\
		H_9 &\ =\ H_8+H_7+H_5
		&&\ =\ 65+37+12
		&&\ =\ 114.
	\end{aligned}
	$$
	We now list the legal, greedy decompositions and gap sizes for all integers in the range $$ [H_8,H_9) \ = \ [65,114)$$ in order to count the total number of gaps of size $1$ and $2$, which are the only small gap sizes for these values of $p$ and $q$. By brute force counting using Table \ref{brute force counting}, we know that altogether there are $33$ gaps of size $1$ and $34$ gaps of size $2$.
	\begin{longtable}{|r|l|l|}
		\caption{Greedy decompositions and gap sizes for integers in $[H_8,H_9)=[65,114)$ when $p=1$ and $q=2$.}
		\label{brute force counting} \\
		\hline
		$N$ & Greedy decomposition & Gap sizes \\
		\hline
		\endfirsthead
		\hline
		$N$ & Greedy decomposition & Gap sizes \\
		\hline
		\endhead
		65 & $H_8$ & -- \\
		66 & $H_8+H_1$ & $(7)$ \\
		67 & $H_8+H_2$ & $(6)$ \\
		68 & $H_8+H_2+H_1$ & $(6,1)$ \\
		69 & $H_8+H_3$ & $(5)$ \\
		70 & $H_8+H_3+H_1$ & $(5,2)$ \\
		71 & $H_8+H_3+H_2$ & $(5,1)$ \\
		72 & $H_8+H_4$ & $(4)$ \\
		73 & $H_8+H_4+H_1$ & $(4,3)$ \\
		74 & $H_8+H_4+H_2$ & $(4,2)$ \\
		75 & $H_8+H_4+H_2+H_1$ & $(4,2,1)$ \\
		76 & $H_8+H_4+H_3$ & $(4,1)$ \\
		77 & $H_8+H_5$ & $(3)$ \\
		78 & $H_8+H_5+H_1$ & $(3,4)$ \\
		79 & $H_8+H_5+H_2$ & $(3,3)$ \\
		80 & $H_8+H_5+H_2+H_1$ & $(3,3,1)$ \\
		81 & $H_8+H_5+H_3$ & $(3,2)$ \\
		82 & $H_8+H_5+H_3+H_1$ & $(3,2,2)$ \\
		83 & $H_8+H_5+H_3+H_2$ & $(3,2,1)$ \\
		84 & $H_8+H_5+H_4$ & $(3,1)$ \\
		85 & $H_8+H_5+H_4+H_1$ & $(3,1,3)$ \\
		86 & $H_8+H_6$ & $(2)$ \\
		87 & $H_8+H_6+H_1$ & $(2,5)$ \\
		88 & $H_8+H_6+H_2$ & $(2,4)$ \\
		89 & $H_8+H_6+H_2+H_1$ & $(2,4,1)$ \\
		90 & $H_8+H_6+H_3$ & $(2,3)$ \\
		91 & $H_8+H_6+H_3+H_1$ & $(2,3,2)$ \\
		92 & $H_8+H_6+H_3+H_2$ & $(2,3,1)$ \\
		93 & $H_8+H_6+H_4$ & $(2,2)$ \\
		94 & $H_8+H_6+H_4+H_1$ & $(2,2,3)$ \\
		95 & $H_8+H_6+H_4+H_2$ & $(2,2,2)$ \\
		96 & $H_8+H_6+H_4+H_2+H_1$ & $(2,2,2,1)$ \\
		97 & $H_8+H_6+H_4+H_3$ & $(2,2,1)$ \\
		98 & $H_8+H_6+H_5$ & $(2,1)$ \\
		99 & $H_8+H_6+H_5+H_1$ & $(2,1,4)$ \\
		100 & $H_8+H_6+H_5+H_2$ & $(2,1,3)$ \\
		101 & $H_8+H_6+H_5+H_2+H_1$ & $(2,1,3,1)$ \\
		102 & $H_8+H_7$ & $(1)$ \\
		103 & $H_8+H_7+H_1$ & $(1,6)$ \\
		104 & $H_8+H_7+H_2$ & $(1,5)$ \\
		105 & $H_8+H_7+H_2+H_1$ & $(1,5,1)$ \\
		106 & $H_8+H_7+H_3$ & $(1,4)$ \\
		107 & $H_8+H_7+H_3+H_1$ & $(1,4,2)$ \\
		108 & $H_8+H_7+H_3+H_2$ & $(1,4,1)$ \\
		109 & $H_8+H_7+H_4$ & $(1,3)$ \\
		110 & $H_8+H_7+H_4+H_1$ & $(1,3,3)$ \\
		111 & $H_8+H_7+H_4+H_2$ & $(1,3,2)$ \\
		112 & $H_8+H_7+H_4+H_2+H_1$ & $(1,3,2,1)$ \\
		113 & $H_8+H_7+H_4+H_3$ & $(1,3,1)$ \\
		\hline
	\end{longtable}

	Now we form the transition matrices $M, M_1$ and $M_0$.
	$$
	\begin{array}{c|ccccccc}
		M & S_1 & S_2 & S_3 & S_4 & S_5 & S_6 & S_7 \\
		\hline
		S_1=000 & 1 & 0 & 0 & 0 & 1 & 0 & 0 \\
		S_2=001 & 1 & 0 & 0 & 0 & 1 & 0 & 0 \\
		S_3=010 & 0 & 1 & 0 & 0 & 0 & 1 & 0 \\
		S_4=011 & 0 & 1 & 0 & 0 & 0 & 1 & 0 \\
		S_5=100 & 0 & 0 & 1 & 0 & 0 & 0 & 1 \\
		S_6=101 & 0 & 0 & 1 & 0 & 0 & 0 & 0 \\
		S_7=110 & 0 & 0 & 0 & 1 & 0 & 0 & 0
	\end{array}
	$$
	$$
	\begin{array}{c|ccccccc}
		M_1 & S_1 & S_2 & S_3 & S_4 & S_5 & S_6 & S_7 \\
		\hline
		S_1 & 0 & 0 & 0 & 0 & 0 & 0 & 0 \\
		S_2 & 1 & 0 & 0 & 0 & 1 & 0 & 0 \\
		S_3 & 0 & 0 & 0 & 0 & 0 & 0 & 0 \\
		S_4 & 0 & 1 & 0 & 0 & 0 & 1 & 0 \\
		S_5 & 0 & 0 & 0 & 0 & 0 & 0 & 0 \\
		S_6 & 0 & 0 & 1 & 0 & 0 & 0 & 0 \\
		S_7 & 0 & 0 & 0 & 0 & 0 & 0 & 0
	\end{array}
	\qquad
	\begin{array}{c|ccccccc}
		M_0 & S_1 & S_2 & S_3 & S_4 & S_5 & S_6 & S_7 \\
		\hline
		S_1 & 1 & 0 & 0 & 0 & 1 & 0 & 0 \\
		S_2 & 0 & 0 & 0 & 0 & 0 & 0 & 0 \\
		S_3 & 0 & 1 & 0 & 0 & 0 & 1 & 0 \\
		S_4 & 0 & 0 & 0 & 0 & 0 & 0 & 0 \\
		S_5 & 0 & 0 & 1 & 0 & 0 & 0 & 1 \\
		S_6 & 0 & 0 & 0 & 0 & 0 & 0 & 0 \\
		S_7 & 0 & 0 & 0 & 1 & 0 & 0 & 0
	\end{array}
	$$
	Here, matrix $M$ represents adding a new digit $0$ or $1$ to the end of our string. Matrix $M_0$ represents appending $0$, while $M_1$ represents appending $1$.
	
	Next, we construct the state-count vector $v_3$, which records the states determined by the three largest-indexed terms $H_8$, $H_7$, and $H_6$. Each component of $v_3$ counts the initial number of each state. Since $H_8$ is forced to be chosen and the pattern $111$ is illegal, the entries of $v_3$ are determined by the remaining legal states.
	$$
	v_3 \ = \ 
	\begin{pmatrix}
		0\\
		0\\
		0\\
		0\\
		1\\
		1\\
		1
	\end{pmatrix}
	\qquad
	\begin{array}{c|c}
		\text{State} & \text{Initial count} \\
		\hline
		S_1=000 & 0 \\
		S_2=001 & 0 \\
		S_3=010 & 0 \\
		S_4=011 & 0 \\
		S_5=100 & 1 \\
		S_6=101 & 1 \\
		S_7=110 & 1
	\end{array}
	$$

	Now we calculate the total number of gaps of size $1$ that go from $H_i$ to $H_{i + g}$. To do so, we first let $i$ run from $1$ to $4$ and then discuss $i = 5, 6, 7$ separately for computing the edge values.
	
	If $i = 1$, then the gap goes from $H_1$ to $H_{2}$ and we can calculate the total number of such gaps.
	$$
	\begin{array}{c|c|c|c|c|c|c}
		\text{State}
		& v_3
		& v_4=Mv_3
		& v_5=Mv_4
		& v_6=Mv_5
		& v_7=M_1v_6
		& v_8=M_1v_7 \\
		\hline
		S_1=000 & 0 & 1 & 2 & 3 & 0 & 0 \\
		S_2=001 & 0 & 1 & 2 & 3 & 5 & 0 \\
		S_3=010 & 0 & 1 & 1 & 3 & 0 & 0 \\
		S_4=011 & 0 & 1 & 1 & 3 & 4 & 8 \\
		S_5=100 & 1 & 1 & 1 & 2 & 0 & 0 \\
		S_6=101 & 1 & 0 & 1 & 1 & 3 & 0 \\
		S_7=110 & 1 & 0 & 1 & 1 & 0 & 0 \\
		\hline
		\mathbf{1}^T v_j
		& 3 & 5 & 9 & 16 & 12 & 8
	\end{array}
	$$
	Similarly if $i = 2, 3, 4$, then the total number of such gaps are shown in the tables below, respectively.
	$$
	\begin{array}{c|c|c|c|c|c|c}
		\text{State}
		& v_3
		& v_4=Mv_3
		& v_5=Mv_4
		& v_6=M_1v_5
		& v_7=M_1v_6
		& v_8=Mv_7 \\
		\hline
		S_1=000 & 0 & 1 & 2 & 0 & 0 & 0 \\
		S_2=001 & 0 & 1 & 2 & 3 & 0 & 0 \\
		S_3=010 & 0 & 1 & 1 & 0 & 0 & 0 \\
		S_4=011 & 0 & 1 & 1 & 3 & 4 & 0 \\
		S_5=100 & 1 & 1 & 1 & 0 & 0 & 0 \\
		S_6=101 & 1 & 0 & 1 & 1 & 0 & 0 \\
		S_7=110 & 1 & 0 & 1 & 0 & 0 & 4 \\
		\hline
		\mathbf{1}^T v_j
		& 3 & 5 & 9 & 7 & 4 & 4
	\end{array}
	$$
	
	$$
	\begin{array}{c|c|c|c|c|c|c}
		\text{State}
		& v_3
		& v_4=Mv_3
		& v_5=M_1v_4
		& v_6=M_1v_5
		& v_7=Mv_6
		& v_8=Mv_7 \\
		\hline
		S_1=000 & 0 & 1 & 0 & 0 & 0 & 0 \\
		S_2=001 & 0 & 1 & 2 & 0 & 0 & 0 \\
		S_3=010 & 0 & 1 & 0 & 0 & 0 & 0 \\
		S_4=011 & 0 & 1 & 1 & 3 & 0 & 0 \\
		S_5=100 & 1 & 1 & 0 & 0 & 0 & 3 \\
		S_6=101 & 1 & 0 & 1 & 0 & 0 & 0 \\
		S_7=110 & 1 & 0 & 0 & 0 & 3 & 0 \\
		\hline
		\mathbf{1}^T v_j
		& 3 & 5 & 4 & 3 & 3 & 3
	\end{array}
	$$
	
	$$
	\begin{array}{c|c|c|c|c|c|c}
		\text{State}
		& v_3
		& v_4=M_1v_3
		& v_5=M_1v_4
		& v_6=Mv_5
		& v_7=Mv_6
		& v_8=Mv_7 \\
		\hline
		S_1=000 & 0 & 0 & 0 & 0 & 0 & 1 \\
		S_2=001 & 0 & 1 & 0 & 0 & 0 & 1 \\
		S_3=010 & 0 & 0 & 0 & 0 & 0 & 0 \\
		S_4=011 & 0 & 1 & 1 & 0 & 0 & 0 \\
		S_5=100 & 1 & 0 & 0 & 0 & 1 & 0 \\
		S_6=101 & 1 & 0 & 0 & 0 & 0 & 0 \\
		S_7=110 & 1 & 0 & 0 & 1 & 0 & 0 \\
		\hline
		\mathbf{1}^T v_j
		& 3 & 2 & 1 & 1 & 1 & 2
	\end{array}
	$$
	
	Now we work on the case where $i = 5$. For the gap $H_6 \to H_5$, the initial state must be $H_8H_7H_6 = 101$, so we need a new vector $v'$ for edge calculation.
	$$
	v_3' \ = \ 
	\begin{pmatrix}
		0\\
		0\\
		0\\
		0\\
		0\\
		1\\
		0
	\end{pmatrix}
	\qquad
	\begin{array}{c|c}
		\text{State} & \text{Initial count} \\
		\hline
		S_1=000 & 0 \\
		S_2=001 & 0 \\
		S_3=010 & 0 \\
		S_4=011 & 0 \\
		S_5=100 & 0 \\
		S_6=101 & 1 \\
		S_7=110 & 0
	\end{array}
	$$
	$$
	\begin{array}{c|c|c|c|c|c|c}
		\text{State}
		& v_3'
		& v_4=M_1v_3'
		& v_5=Mv_4
		& v_6=Mv_5
		& v_7=Mv_6
		& v_8=Mv_7 \\
		\hline
		S_1=000 & 0 & 0 & 0 & 0 & 1 & 1 \\
		S_2=001 & 0 & 0 & 0 & 0 & 1 & 1 \\
		S_3=010 & 0 & 0 & 0 & 0 & 0 & 1 \\
		S_4=011 & 0 & 1 & 0 & 0 & 0 & 1 \\
		S_5=100 & 0 & 0 & 0 & 1 & 0 & 0 \\
		S_6=101 & 1 & 0 & 0 & 0 & 0 & 0 \\
		S_7=110 & 0 & 0 & 1 & 0 & 0 & 0 \\
		\hline
		\mathbf{1}^T v_j
		& 1 & 1 & 1 & 1 & 2 & 4
	\end{array}
	$$
	
	Note that $i=6$ implies the forbidden pattern $111$, so there are none.
	
	When $i = 7$ (i.e., for the gap $H_8 \to H_7$ of size $1$), the initial state must be $H_8H_7H_6 = 110$, so we construct another $v''$ just for the edge counting.
	$$
	v_3'' \ = \ 
	\begin{pmatrix}
		0\\
		0\\
		0\\
		0\\
		0\\
		0\\
		1
	\end{pmatrix}
	\qquad
	\begin{array}{c|c}
		\text{State} & \text{Initial count} \\
		\hline
		S_1=000 & 0 \\
		S_2=001 & 0 \\
		S_3=010 & 0 \\
		S_4=011 & 0 \\
		S_5=100 & 0 \\
		S_6=101 & 0 \\
		S_7=110 & 1
	\end{array}
	$$
	$$
	\begin{array}{c|c|c|c|c|c|c}
		\text{State}
		& v_3''
		& v_4=Mv_3''
		& v_5=Mv_4
		& v_6=Mv_5
		& v_7=Mv_6
		& v_8=Mv_7 \\
		\hline
		S_1=000 & 0 & 0 & 1 & 1 & 1 & 2 \\
		S_2=001 & 0 & 0 & 1 & 1 & 1 & 2 \\
		S_3=010 & 0 & 0 & 0 & 1 & 1 & 2 \\
		S_4=011 & 0 & 0 & 0 & 1 & 1 & 2 \\
		S_5=100 & 0 & 1 & 0 & 0 & 1 & 2 \\
		S_6=101 & 0 & 0 & 0 & 0 & 1 & 1 \\
		S_7=110 & 1 & 0 & 0 & 0 & 1 & 1 \\
		\hline
		\mathbf{1}^T v_j & 1 & 1 & 2 & 4 & 7 & 12
	\end{array}
	$$
	Summing over $i$, the matrix products for gaps of size $1$ sum to $33$, agreeing with the brute force count.
	$$
	\begin{array}{c|c|c|c}
		i & \text{Forced gap} & \text{Matrix product} & \text{Count} \\
		\hline
		1 & H_2\to H_1 & \mathbf{1}^T M_1M_1M^3v_3 & 8 \\
		2 & H_3\to H_2 & \mathbf{1}^T MM_1M_1M^2v_3 & 4 \\
		3 & H_4\to H_3 & \mathbf{1}^T M^2M_1M_1Mv_3 & 3 \\
		4 & H_5\to H_4 & \mathbf{1}^T M^3M_1M_1v_3 & 2 \\
		5 & H_6\to H_5 & \mathbf{1}^T M^4M_1v_3' & 4 \\
		6 & H_7\to H_6 & \text{impossible, since it would force }111 & 0 \\
		7 & H_8\to H_7 & \mathbf{1}^T M^5v_3'' & 12 \\
		\hline
		\multicolumn{3}{c|}{\text{Total number of gaps of size }1} & 33
	\end{array}
	$$
	
	Now we calculate the total number of gaps of size $2$. The method is straightforward and similar to the one used for gaps of size $1$.
	For $g=2$, we force the pattern
	$$
	H_{i+2} \ = \ 1,\qquad H_{i+1} \ = \ 0,\qquad H_i \ = \ 1.
	$$
	Thus the forced block is $M_1M_0M_1$. Summing over $i$, we get $34$ gaps of size $2$, matching the brute force count.\footnote{For $g=2$ and $i=5$, the forced gap is $H_7 \to H_5$, so the forced pattern is $H_7H_6H_5=101$. Since $H_8$ is already forced, this creates $H_8H_7H_6H_5=1101$, which is exactly the recurrence block and thus is illegal.}
	$$
	\begin{array}{c|c|c|c}
		i & \text{Forced gap} & \text{Matrix product} & \text{Count} \\
		\hline
		1 & H_3\to H_1 & \mathbf{1}^T M_1M_0M_1M^2v_3 & 4 \\
		2 & H_4\to H_2 & \mathbf{1}^T MM_1M_0M_1Mv_3 & 6 \\
		3 & H_5\to H_3 & \mathbf{1}^T M^2M_1M_0M_1v_3 & 3 \\
		4 & H_6\to H_4 & \mathbf{1}^T M^3M_1M_0v_3' & 5 \\
		5 & H_7\to H_5 & \mathbf{1}^T M^4M_1v_3'' & 0 \\
		6 & H_8\to H_6 & \mathbf{1}^T M^5v_3' & 16 \\
		\hline
		\multicolumn{3}{c|}{\text{Total number of gaps of size }2} & 34
	\end{array}
	$$
	
	\subsection{$(p,q,n)=(2,1,9)$}
	
	For $(p,q)=(2,1)$, the recurrence is
	$$
	H_{n+1} \ = \ H_n+H_{n-2}+H_{n-3}.
	$$
	The initial conditions are
	$$
	H_1 \ = \ 1,\qquad H_2 \ = \ 2,\qquad H_3 \ = \ 3,\qquad H_4 \ = \ 5.
	$$
	Applying the recurrence gives
	$$
	H_5 \ = \ 8,\quad H_6 \ = \ 13,\quad H_7 \ = \ 21,\quad
	H_8 \ = \ 34,\quad H_9 \ = \ 55,\quad H_{10} \ = \ 89.
	$$
	Thus, we consider the legal decompositions of the integers in
	$$
	[H_9,H_{10}) \ = \ [55,89),
	$$
	and counting the gaps in these decompositions gives
	$$
	\begin{array}{c c c c}
		\hline
		p & q & g & P_9(g) \\
		\hline
		2 & 1 & 1 & 0/71 \\
		2 & 1 & 2 & 33/71 \\
		\hline
		2 & 1 & 3 & 18/71 \\
		2 & 1 & 4 & 10/71 \\
		2 & 1 & 5 & 5/71 \\
		2 & 1 & 6 & 3/71 \\
		2 & 1 & 7 & 1/71 \\
		2 & 1 & 8 & 1/71 \\
		\hline
	\end{array}
	$$
	
	The recurrence block is $1011$ and the pattern $11$ is forbidden. Thus, the legal states of length $p+q=3$ are
	$$
	S_1=000,\qquad
	S_2=001,\qquad
	S_3=010,\qquad
	S_4=100,\qquad
	S_5=101.
	$$
	
	Now we form the transfer matrix $M$.
	$$
	\begin{array}{c|ccccc}
		M & S_1 & S_2 & S_3 & S_4 & S_5 \\
		\hline
		S_1=000 & 1 & 0 & 0 & 1 & 0 \\
		S_2=001 & 1 & 0 & 0 & 1 & 0 \\
		S_3=010 & 0 & 1 & 0 & 0 & 1 \\
		S_4=100 & 0 & 0 & 1 & 0 & 0 \\
		S_5=101 & 0 & 0 & 1 & 0 & 0
	\end{array}
	$$
	Separating transitions according to whether the appended digit is
	$1$ or $0$ gives $M_1$ and $M_0$.
	$$
	M_1 \ = \ 
	\begin{pmatrix}
		0 & 0 & 0 & 0 & 0 \\
		1 & 0 & 0 & 1 & 0 \\
		0 & 0 & 0 & 0 & 0 \\
		0 & 0 & 0 & 0 & 0 \\
		0 & 0 & 1 & 0 & 0
	\end{pmatrix}
	\qquad
	M_0 \ = \ 
	\begin{pmatrix}
		1 & 0 & 0 & 1 & 0 \\
		0 & 0 & 0 & 0 & 0 \\
		0 & 1 & 0 & 0 & 1 \\
		0 & 0 & 1 & 0 & 0 \\
		0 & 0 & 0 & 0 & 0
	\end{pmatrix}
	$$
	
	We next construct the initial state-count vector $v_3$. Since $H_9$
	is forced to be chosen, the first digit of the state corresponding
	to $H_9,H_8,H_7$ must be $1$.
	$$
	v_3 \ = \ 
	\begin{pmatrix}
		0\\
		0\\
		0\\
		1\\
		1
	\end{pmatrix}
	\qquad
	\begin{array}{c|c}
		\text{State} & \text{Initial count} \\
		\hline
		S_1=000 & 0 \\
		S_2=001 & 0 \\
		S_3=010 & 0 \\
		S_4=100 & 1 \\
		S_5=101 & 1
	\end{array}.
	$$
	
	When counting the number of gaps of size $2$, for boundary cases in which 
	$H_9$ and $H_7$ are both forced to choose, we invent an special vector $v_3'$ only for edge counting.
	$$
	v_3' \ = \ 
	\begin{pmatrix}
		0\\
		0\\
		0\\
		0\\
		1
	\end{pmatrix}
	$$
	
	Gaps of size $1$ are impossible because they would produce the
	forbidden pattern $11$. We now count gaps of size $2$.
	
	$$
	\begin{array}{c|c|c|c}
		i & \text{Forced gap} & \text{Matrix product} & \text{Count} \\
		\hline
		1 & H_3 \to H_1
		& \mathbf{1}^{T}M_1M_0M_1M^3v_3
		& 5 \\
		
		2 & H_4 \to H_2
		& \mathbf{1}^{T}MM_1M_0M_1M^2v_3
		& 3 \\
		
		3 & H_5 \to H_3
		& \mathbf{1}^{T}M^2M_1M_0M_1Mv_3
		& 4 \\
		
		4 & H_6 \to H_4
		& \mathbf{1}^{T}M^3M_1M_0M_1v_3
		& 3 \\
		
		5 & H_7 \to H_5
		& \mathbf{1}^{T}M^4M_1M_0v_3'
		& 5 \\
		
		6 & H_8 \to H_6
		& \text{impossible, since it would force the pattern }11
		& 0 \\
		
		7 & H_9 \to H_7
		& \mathbf{1}^{T}M^6v_3'
		& 13 \\
		\hline
		\multicolumn{3}{c|}{\text{Total number of gaps of size }2}
		& 33
	\end{array}
	$$
	We can see that the result give by matrix method agrees with that from brute force counting.

	%%%%%%%%%%%%%%%%%%%%%%%%%%%%%%%%%%%%%%%%%%%%%%%%%%%%%%%%%%%%%%%%%%%%%%%%%%%%%%%%%%%%%%%%%%%%%%%%%%%%%%%%%%%%%%%%%%%%%%%%%%%%%%%%%%%%%%%%%%%%%%%%%%%%%%%%%%%%%%
	
\section{The distribution of the average gap measure for PLRS $H_{n + 1} = H_n + H_{n - p}$} \label {Kangaroo}
	
	In the previous section, we studied the sequence $H_{n + 1} = H_n + H_{n - 2}$. In this section, we generalize the finding and study the distribution of the average gap measure for the recurrence in the form of $H_{n + 1} = H_n + H_{n - p}$ with initial conditions to match those for a PLRS sequence. We consider integers $N$ in the interval $[H_n, H_{n + 1})$ and analyze the behavior of the gap distribution in their legal decompositions as $n \to \infty$. Then, we show that the distribution decays geometrically.
	
	\subsection{The minimum gap size}
	According to the definition of a legal decomposition and the structure of the recurrence, a decomposition of $N$ is legal if and only if the gap between consecutive summands has size at least $p+1$.

	\subsection{Total number of gaps}
	
	\begin{corollary}
		The total number of gaps in the legal decompositions of all $N \in [H_n, H_{n + 1})$ is
		$$H_{n - p} (Cn + d - 1 + o(\gamma_1^n)).$$ 
	\end{corollary}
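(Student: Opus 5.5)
The argument mirrors the count of total gaps in Section \ref{large gap section}, specialized to the two-term recurrence $H_{n+1}=H_n+H_{n-p}$. First I will note that this recurrence is a PLRS: take $L=p+1$, $c_1=1$, $c_{p+1}=1$, and $c_2=\dots=c_p=0$. Since $c_1c_L=1\ge 1$ and all coefficients are non-negative integers, with initial conditions chosen to match a PLRS, both Theorem \ref{generalzeck} and Theorem \ref{generallek} apply.

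Next I will invoke Theorem \ref{generallek}. It gives constants $C>0$, $d$, and $\gamma_1\in(0,1)$, depending only on $p$, such that the mean number of summands $K_n$ over $N\in[H_n,H_{n+1})$ is $Cn+d+o(\gamma_1^n)$. A decomposition with $k$ summands has exactly $k-1$ gaps between consecutive summands. By linearity of expectation, the mean number of gaps is therefore $Cn+d-1+o(\gamma_1^n)$.

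Finally, I will multiply this average by the number of integers in the interval. By the uniqueness in Theorem \ref{generalzeck}, the number of legal decompositions of integers in $[H_n,H_{n+1})$ equals $H_{n+1}-H_n$, which is $H_{n-p}$ by the recurrence. The total number of gaps is the average times the count, so it equals $H_{n-p}(Cn+d-1+o(\gamma_1^n))$, as claimed.

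The only point that needs any care is the recurrence identity $H_{n+1}-H_n=H_{n-p}$ for small $n$, that is, in the initial-condition range. The initial conditions add $+1$ rather than following the recurrence exactly, so the identity is not automatic there. I will handle this by restricting to $n\ge p+1$, where the recurrence genuinely holds. That restriction is harmless because the statement is asymptotic in $n$. Everything else is a direct citation of the earlier theorems.
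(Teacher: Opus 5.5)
Your proposal is correct and matches the paper's proof: you take the Lekkerkerker average $Cn+d+o(\gamma_1^n)$, subtract one gap per decomposition, and multiply by the number of decompositions, which uniqueness gives as $H_{n+1}-H_n=H_{n-p}$. Your extra checks are harmless additions that the paper leaves implicit: that the recurrence is a PLRS with $L=p+1$, and that restricting to $n\ge p+1$ makes the identity $H_{n+1}-H_n=H_{n-p}$ hold.
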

	
	\begin{proof}
		By Theorem \ref{generallek}, the average number of summands in the legal decompositions of all $N \in [H_n, H_{n + 1})$ is $Cn + d + o(\gamma_1^n)$. Since a decomposition with $k$ summands contains $k - 1$ gaps, the average number of gaps is $Cn + d - 1 + o(\gamma_1^n).$
		
		The total number of gaps is equal to the average number of gaps multiplied by the total number of decompositions. By the uniqueness of legal decompostiions, the total number of decompositions in $[H_n, H_{n + 1})$ is equal to the number of integers in this interval, namely $H_{n + 1} - H_n = H_{n - p}$.
	\end{proof}
	
	\subsection{Number of occurrences of a gap of size $g$}
	
	\begin{theorem}
		The number of occurrences of a gap of size $g$ (where $g \ge p+1$) in the legal decompositions of all $N \in [H_n, H_{n + 1})$ is
		$$\sum_{i = 1}^{n - g} H_{i - p} \cdot H_{n - i - g - 2p}.$$
	\end{theorem}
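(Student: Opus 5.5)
We mirror the proof of Lemma \ref{gap_no.}, now for the two-term recurrence $H_{n+1}=H_n+H_{n-p}$. Fix a position $i$ and suppose $H_{i+g}$ and $H_i$ are consecutive summands, with $g\ge p+1$. By the minimum-gap characterization just stated, a decomposition is legal exactly when every gap between consecutive summands is at least $p+1$. This condition is purely pairwise: it involves no three-summand pattern, unlike the $p,q$ case. Hence the configurations to the left of $H_{i+g}$ and to the right of $H_i$ do not interact once the gap between them is at least $p+1$. The count therefore factors as a product of a right-side count and a left-side count, and we sum over $i$ from $1$ to $n-g$.

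\textbf{Right side.} The right side, including $H_i$, consists of the legal choices from $\set{H_1,\dots,H_i}$ with $H_i$ chosen. By Theorem \ref{generalzeck} these are in bijection with the integers in $[H_i,H_{i+1})$, so their number is $H_{i+1}-H_i=H_{i-p}$.

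\textbf{Left side.} The left side, including $H_{i+g}$, consists of the legal choices from $\set{H_{i+g},\dots,H_n}$ with both endpoints forced. Shifting indices by $i+g-1$, this is the number of legal strings on $\set{H_1,\dots,H_m}$, where $m=n-i-g+1$, with $H_1$ and $H_m$ both chosen. The shift is valid because legality depends only on gaps, so the only issue is the initial segment, which is handled by the same backward-extension convention used earlier. We count it as in Lemma \ref{gap_no.}: take the arrangements with $H_m$ forced and subtract those with $H_m$ forced and $H_1$ excluded. The first number is $H_{m+1}-H_m=H_{m-p}$. The second, after a further shift, is $H_m-H_{m-1}=H_{m-1-p}$. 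Their difference is $H_{m-p}-H_{m-p-1}=H_{m-2p-1}$, by the recurrence applied at index $m-p-1$. Substituting $m$ gives $H_{n-i-g-2p}$.

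Multiplying the two sides and summing over $i$ yields $\sum_{i=1}^{n-g} H_{i-p}H_{n-i-g-2p}$. The main obstacle is bookkeeping at the boundaries, where the indices $i-p$ and $n-i-g-2p$ can be small or negative. There we extend the recurrence backward, as in the footnote to Lemma \ref{gap_no.}, and check that the extended values give the correct counts. For example, for small $i$ with $H_{i-p}$ at an initial term, the count of choices below $H_i$ must still equal $H_{i+1}-H_i$. This holds by the PLRS initial conditions $H_{k+1}=H_k+1$ for $k\le p$, which give $H_{i+1}-H_i=1=H_{i-p}$ in the extended convention once $H_0=H_{-1}=\dots=1$ is forced. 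I would verify these small cases directly, and cross-check against the $p=2$ example discussed earlier.
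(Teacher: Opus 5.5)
Your proposal is correct and follows the paper's proof essentially step for step: the right side is counted as $H_{i+1}-H_i=H_{i-p}$, and the left side is obtained by shifting indices and subtracting the arrangements with $H_1$ excluded from those with the top summand forced, which gives $H_{m-p}-H_{m-p-1}=H_{n-i-g-2p}$; the product is then summed over $i$, with the backward-extended recurrence covering small indices. Your explicit remark that legality for $H_{n+1}=H_n+H_{n-p}$ is purely pairwise (every gap at least $p+1$), which justifies the factorization, and your boundary checks are small additions the paper leaves implicit; the extended values actually needed are $H_{1-p}=\dots=H_0=1$, $H_{1-2p}=\dots=H_{-p}=0$ and $H_{-2p}=1$.
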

	
	\begin{proof}
		We prove this theorem by counting the number of possible arrangements to the left and to the right of a gap of size $g$. Let $H_i$ and $H_{i + g}$ be consecutive summands in the decomposition.
		
		First, the number of arrangements to the right of the gap (including $H_i$) equals the number of legal choices from $\set{H_1, H_2, \dots, H_i}$ with $H_i$ chosen. By the existence and uniqueness of legal decompositions for this PLRS, these choices are in bijection with the total number of integers in the interval $[H_i, H_{i + 1})$. Thus, the number of such arrangement is $H_{i + 1} - H_i = H_{i - p}$.
		
		Next, the number of arrangements to the left of the gap (including $H_{i + g}$) correspond to the number of legal choices from $\set{H_{i + g}, H_{i + g + 1}, \dots, H_n}$ with $H_{i + g}$ and $H_n$ chosen. By shifting indices, this bijects with the number of legal choices from $\set{H_1, H_2, \dots, H_{n - i - g + 1}}$ with $H_1$ and $H_{n - i - g + 1}$ must chosen. In order to compute this quantity, we find the number of arrangements where $H_{n - i - g + 1}$ must be used and subtract the number of arrangements where $H_{n - i - g + 1}$ must be used but $H_1$ cannot be used.
		
		The number of arrangements where $H_{n - i - g + 1}$ must be used bijects with the number of integers in $[H_{n-i-g+1}, H_{n-i-g+2})$, which is $H_{n - i - g + 2} - H_{n - i - g + 1} = H_{n - i - g - p + 1}$. And the number of arrangements where $H_{n - i - g + 1}$ must be used but $H_1$ cannot be used bijects with the number of legal choices from $ \set {H_2, H_3, ... H_{n-i-g+1}}$  where $H_{n-i-g+1}$ must be chosen, and thus forms a bijection with the number of legal choices from $ \set {H_1, H_2, ..., H_{n - i - g}}$ where $H_{n - i - g}$ must be chosen by shifting indices. This is then equal to the number of integers in $[H_{n-i-g}, H_{n-i-g+1})$, which is $H_{n - i - g + 1} - H_{n - i - g} = H_{n - i - g - p}$.
		Thus, the valid left-hand arrangements is $H_{n - i - g - p + 1} - H_{n - i - g - p} = H_{n - i - g - 2p}$.
		
		Multiplying the number of choices on the left and right and summing over all possible positions $i$ of the gaps gives the total number of occurrences of a gap of size $g$ is 
		$$\sum_{i = 1}^{n - g} H_{i - p} \cdot H_{n - i - g - 2p}.$$
		
		In cases where negative indices appear, we extend the recurrence backward until $H_{-2p}$.
	\end{proof}
	
	\subsection{The distribution of the average gap measure and its geometric decay}
	
	\begin{theorem}
		The probability of choosing a gap of size $g$ from decompositions of all $N \in [H_n, H_{n + 1})$ is
		\begin{equation}\label{asymptoticgeneral}
			P_n(g) \ = \ \frac{\sum_{i = 1}^{n - g} H_{i - p} \cdot H_{n - i - g - 2p}}{H_{n - p} (Cn + d - 1 + o(\gamma_1^n))},
		\end{equation}
		and the distribution of $P_n(g)$ geometrically decays as $n \to \infty$.
	\end{theorem}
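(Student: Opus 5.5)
The formula for $P_n(g)$ follows by dividing the count of gaps of size $g$ by the total count of gaps. The numerator is the number of occurrences of a gap of size $g \ge p+1$, given by the preceding theorem as $\sum_{i=1}^{n-g} H_{i-p} H_{n-i-g-2p}$. The denominator is the total number of gaps, $H_{n-p}(Cn+d-1+o(\gamma_1^n))$, from the corollary. This gives \eqref{asymptoticgeneral}, so all the work lies in the asymptotic analysis showing geometric decay. I will mirror the proof of Theorem \ref{gap_large}.

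For the asymptotics, use the generalized Binet formula $H_m = \kappa_1 r_1^m + O(\rho^m)$. Here $r_1>1$ is the dominant root of $x^{p+1} - x^p - 1$, which is simple by the double-root argument already given for general PLRS, and $\kappa_1>0$. Split the sum over $i$ into three ranges, $1 \le i \le f(n)$, $f(n) < i < n-g-f(n)$ and $n-g-f(n) \le i \le n-g$, with $f(n)=\log n$.

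In each edge range every term is bounded by $O(r_1^{n-g-3p})$; negative indices are handled by the backward extension of the recurrence down to $H_{-2p}$, which are finitely many fixed numbers. The edge contribution is therefore $O(\log n \cdot r_1^{n})$. This is $o(n r_1^{n-p})$, the order of the denominator, so it vanishes in the limit.

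In the middle range both indices $i-p$ and $n-i-g-2p$ tend to infinity. Each product is then
\begin{equation}
H_{i-p}H_{n-i-g-2p} \ = \ \kappa_1^2 r_1^{n-g-3p}(1+o(1)),
\end{equation}
uniformly in $i$, because the relative errors are $O((\rho/r_1)^{f(n)})$. Summing over the $n-o(n)$ middle indices gives $\kappa_1^2 r_1^{n-g-3p} n + o(n r_1^{n})$. The denominator is $\kappa_1 r_1^{n-p} C n (1+o(1))$. Hence
\begin{equation}
P(g) \ = \ \lim_{n\to\infty} P_n(g) \ = \ \frac{\kappa_1}{C} r_1^{-2p} \cdot r_1^{-g}, \qquad g \ge p+1,
\end{equation}
which decays geometrically with ratio $r_1$. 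Gaps of size at most $p$ never occur, by the minimum gap statement, so $P(g)=0$ for $g \le p$.

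The main obstacle is making the middle-range error uniform. The $o(1)$ terms must be controlled simultaneously for all $i$, not just pointwise. I would do this by bounding each error by $C'(\rho/r_1)^{\min(i-p,\,n-i-g-2p)} \le C'(\rho/r_1)^{f(n)-O(1)}$, which goes to $0$ independently of $i$. A second point to check is that the Lekkerkerker constant $C$ is positive, so that the denominator really has order $n r_1^{n}$. Theorem \ref{generallek} supplies this.
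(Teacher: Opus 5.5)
Your proposal is correct and follows the paper's proof essentially step for step. Like the paper, you use the Binet asymptotics with $\kappa_1>0$ and the three-region split with $f(n)=\log n$, show the edge contributions are negligible, and give a uniform middle-range estimate, arriving at $P(g)=\frac{\kappa_1}{C}r_1^{-2p}\,r_1^{-g}$; your coarser edge bound $O(\log n\cdot r_1^{n})$ and error term $o(nr_1^{n})$ are harmless because $p$ and $g$ are fixed.
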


	\begin{proof}
		By the generalized Binet formula,
		$$
		H_n \ = \ \kappa_{r_1,1} r_1^n + \kappa_{r_2,1} r_2^n + \dots + \kappa_{r_{p + 1},1} r_{p + 1}^n,
		$$
		where $r_1$ is the unique dominant root and $|r_2|, |r_3|, \dots, |r_{p + 1}| < r_1$.
		We can also prove that the leading coefficient $\kappa_{r_1,1} > 0$. Since we have proved that $r_1$ is the unique dominant root, and the generalized Binet formula gives
		$$
		H_n \ = \ \kappa_{r_1,1}r_1^n+o(r_1^n),
		$$
		by dividing $r_1^n$ we obtain
		$$
		\frac{H_n}{r_1^n} \ = \ \kappa_{r_1,1}+o(1).
		$$
		The left-hand side is positive for all $n$, since both $H_n>0$ and $r_1$ are positive, thus forcing the limiting leading coefficient must be positive. (It cannot be zero, or else $H_n = o(r_1^n)$, contradicting the fact that $r_1$ is the dominant growth rate of the sequence.)
		
		Each product
		$H_{i-p} H_{n-i-g-2p}$ in the summation in \eqref{asymptoticgeneral} expands into $(p + 1)^2$ terms. Let $ a = i - p$ and $ b = n - i - g - 2p$ for simplicity.
		By the closed-form of $H_n$, we obtain
		\begin{equation} \label{p_expansion}
			\begin{aligned}
				H_a H_b
				& \ = \ (\kappa_{r_1,1} r_1^{a} + \dots + \kappa_{r_{p + 1},1} r_{p + 1}^{a})(\kappa_{r_1,1} r_1^{b} + \dots + \kappa_{r_{p + 1},1} r_{p + 1}^{b}) \\
				& \ = \ \kappa_1^2 r_1^{a} r_1^{b}
				+ \kappa_1 \kappa_2 r_1^{a} r_2^{b}
				+ \dots 
				+ \kappa_{p + 1}^2 r_{p + 1}^{a} r_{p + 1}^{b} \\
				& \ = \ r_1^{a} r_1^{b} \cdot \Bigl( \kappa_1^2 
				+ \kappa_1 \kappa_2 \left( \frac{r_2}{r_1} \right) ^{b}
				+\dots
				+ \kappa_{p + 1}^2 \left( \frac{r_{p + 1}}{r_1} \right) ^{a} \left( \frac{r_{p + 1}}{r_1} \right) ^{b} \Bigr).
			\end{aligned}
		\end{equation}

		The significance of the non-leading pieces are not universally the same in the summation in \eqref{p_expansion}. We know that the magnitudes of $r_2/r_1, r_3/r_1, \dots, r_{p + 1}/r_1$ are all between $0$ and $1$, so when the index is close to either the smallest or largest index in the summation (i.e., when $a$ or $b$ is small), the non-leading contribution may be of the same order as the leading contribution up to a constant factor, while in the middle range, the contribution of the error terms can be insignificant since $a$ and $b$ are both relatively large. Thus, to do the rigorous analysis, we need to separate the summation into three regions:
		$$
		\sum_{i=1}^{n-g} H_a H_b \ = \ \sum_{i=1}^{f(n)} H_a H_b + \sum_{i=f(n)+1}^{n-g-f(n)-1} H_a H_b + \sum_{i=n-g-f(n)}^{n-g} H_a H_b,
		$$
		where we let $f(n) = \log n$ such that, when $n \to \infty$, we have $f(n) \to \infty$. Let $h(n) = n$ and by L'H\^{o}pital's Rule, we get
		$$
		\lim_{n \to \infty} \frac{f(n)}{h(n)} 
		\ = \ \lim_{n \to \infty} \frac{f'(n)}{h'(n)} 
		\ = \ \lim_{n \to \infty} \frac{1/n}{1}
		\ = \ 0
		$$
		(i.e., $f(n) = o(n)$).
		
		The first and third sums correspond to the left and right edge regions respectively, each of which contains at most $f(n)$ terms.
		Since each non-leading product has order at most $r_1^{n-g-3p}$ (i.e., $r_1^{a + b}$), we get the total left edge size as
		$$
		\sum_{i=1}^{f(n)} H_{i-p} H_{n-i-g-2p}
		\ = \ O(f(n) \cdot r_1^{n - g - 3p}).
		$$
		
		As for the denominator of \eqref{asymptoticgeneral},
		$$H_{n-p} (Cn + d - 1 + o(\gamma_1^n)) \ = \ \Theta \left( n \cdot r_1^{n - p}\right). $$
		By forming the ratio, we obtain that 
		$$\frac{\text{left edge contribution}}{\text{denominator}}
		\ = \ O \left( \frac{f(n) \cdot r_1^{n - g - 3p}}{n \cdot r_1^{n-p}} \right)
		\ = \ O \left( \frac{f(n)}{n} \cdot r_1^{- g - 2p}\right). $$
		As $f(n)/n \to 0$, the left edge contribution is negligible. Similarly, we find that the right edge contribution is also negligible.
		
		We now consider the middle range
		$$ f(n) +1 \ \leq \ i \ \leq \ n-g-f(n) - 1.$$
		In this region, $i - p \geq f(n) - p + 1$, or $i - p \geq f(n) + O(1)$, and $n-i-g-2p \geq f(n) - 2p + 1$, or $n-i-g-2p \geq f(n) + O(1).$

		Let $\lambda$ largest out of $|r_2|, |r_3|, \dots, |r_{p + 1}| < r_1$. 		
		The error term arises from all other $(p + 1)^2 - 1$ terms other than the main term, each of which contains at least one of $(r_2/r_1)^k, (r_3/r_1)^k, \dots, (r_{p + 1}/r_1)^k$, where $k$ is either equal to $a$ (i.e., $i - p$) or $b$ (i.e., $n - i - g - 2p$). Thus, every non-dominant contribution in \eqref{p_expansion} gains a factor of $(\lambda/r_1)^{i-p}$ or $(\lambda/r_1)^{n-i-g-2p}$, both of which are bounded above by $(\lambda/r_1)^{f(n)+O(1)} = C(\lambda/r_1)^{f(n)}$ for some constant $C$. Since constants are absorbed into the Big-O term, we then get
		$$
		H_{i-p} H_{n-i-g-2p}
		\ = \ \kappa_1^2 r_1^{n-g-3p} + O\left(r_1^{n-g-3p} \left(\frac{\lambda}{r_1}\right)^{f(n)}\right).
		$$
		Because $2f(n) + O(1) = O(n)$, summing the $n-2f(n)+O(1)$ terms over the middle range gives
		$$
		\sum_{i=f(n)+1}^{n-g-f(n)-1} H_{i-p} H_{n-i-g-2p}
		\ = \ \kappa_1^2 r_1^{n-g-3p} ( n - 2f(n) + O(1) )
		+ O\left(n r_1^{n-g-3p} \left(\frac{\lambda}{r_1}\right)^{f(n)}\right).
		$$
		Since $f(n) = \log n$, we have $(\lambda/r_1)^{f(n)} \to 0$ as $n \to \infty$, thus giving
		$$
		\sum_{i=f(n)+1}^{n-g-f(n)-1} H_{i-p} H_{n-i-g-2p}
		\ = \ \kappa_1^2 r_1^{n-g-3p} (n-2f(n)+O(1)) + o(n r_1^{n-g-3p}).
		$$
		Because $2f(n) + O(1) = o(n)$, the equation above can be further simplified to
		\begin{align*}
			\sum_{i=f(n)+1}^{n-g-f(n)-1} H_{i-p} H_{n-i-g-2p}
			& \ = \ \kappa_1^2 r_1^{n-g-3p} (n+o(n)) + o(n r_1^{n-g-3p}) \\
			& \ = \ \kappa_1^2 r_1^{n-g-3p} n + \kappa_1^2 o(n r_1^{n-g-3p}) + o(n r_1^{n-g-3p}) \\
			& \ = \ \kappa_1^2 r_1^{n-g-3p} n + o\!\left(n r_1^{n-g-3p}\right).
		\end{align*}
		
		Substituting this into \eqref{asymptoticgeneral}, and using
		$ H_{n-p} = \kappa_1 r_1^{n-p} + o(r_1^{n-p}), $
		we obtain
		$$
		P_n(g)
		\ = \ \frac{\kappa_1^2 r_1^{n-g-3p} n + o(n r_1^{n-g-3p})}
		{(\kappa_1 r_1^{n-p} + o(r_1^{n-p})) (Cn + d - 1 + o(\gamma_1^n))}.
		$$
		The denimonator can be written as
		$$
		(\kappa_1 r_1^{n-p} + o(r_1^{n-p})) (Cn + O(1)) \ \sim \ (\kappa_1 r_1^{n-p} + o(r_1^{n-p}))Cn,
		$$
		so
		$$
		P_n(g)
		\ = \ \frac{\kappa_1^2 n r_1^{n-g-3p} + o(n r_1^{n-g-3p})}
		{\kappa_1 r_1^{n-p} Cn + o(nr_1^{n-p})}.
		$$
		Simplifying gives us
		$$P_n(g)
		\ = \ \frac{\kappa_1}{C} r_1^{-g-2p} + o(1).$$
		Taking the limit as $n \to \infty$, we obtain
		$$
		\lim_{n \to \infty} P_n(g) \ = \ K r_1^{-g},
		$$
		for some constant $K > 0$. Since $r_1 > 1$, this shows that the distribution
		decays geometrically in $r_1$.
	\end{proof}
	
	\subsection{Finding the constant $K$ in the geometric decay}
	
	From the asymptotic analysis, we have
	\begin{equation*}
	\lim_{n\to\infty} P_n(g)
	\ = \ \frac{\kappa_1}{C}\, r_1^{-g-2p}.
	\end{equation*}
	Since legal decompositions require gaps of size at least $p + 1$, the limiting probabilities must satisfy
	\begin{equation*}
	\sum_{g = p + 1}^{\infty} \lim_{n\to\infty} P_n(g) \ = \ 1.
	\end{equation*}
	Substituting the expression above yields
	\begin{equation*}
	\frac{\kappa_1}{C} \sum_{g=p + 1}^{\infty} r_1^{-g-2p} \ = \ 1.
	\end{equation*}
	Evaluating the geometric series gives
	\begin{equation*}
	\sum_{g=p + 1}^{\infty} r_1^{-g-2p} \ = \ \sum_{m = 3p + 1}^{\infty} r_1^{-m} \ = \ \frac{r_1^{-(3p + 1)}}{1 - r_1^{-1}},
	\end{equation*}
	where $m = g+2p$, and hence
	\begin{equation*}
	\frac{\kappa_1}{C} \cdot \frac{r_1^{-(3p + 1)}}{1 - r_1^{-1}} \ = \ 1,
	\end{equation*}
	which implies
	\begin{equation*}
	\frac{\kappa_1}{C} \ = \ r_1^{3p} (r_1 - 1).
	\end{equation*}
	Using the identity $r_1^{p + 1} - r_1^{p} - 1 = 0$, we obtain $r_1^{p}(r_1 - 1) = 1$, and thus $r_1^{3p}(r_1 - 1) = r_1^{2p}$. Therefore,
	\begin{equation*}
	\frac{\kappa_1}{C} \ = \ r_1^{2p},
	\end{equation*}
	and the limiting distribution simplifies to
	\begin{equation*}
	\lim_{n\to\infty} P_n(g) \ = \ r_1^{-g}, \quad g \ge p + 1,
	\end{equation*}
	which is a geometric distribution with ratio $1/r_1$.
	
	%%%%%%%%%%%%%%%%%%%%%%%%%%%%%%%%%%%%%%%%%%%%%%%%%%%%%%%%%%%%%%%%%%%%%%%%%%%%%%%%%%%%%%%%%%%%%%%%%%%%%%%%%%%%%%%%%%%%%%%%%%%%%%%%%%%%%%%%%%%%%%%%%%%%%%%%%%%%%%%%%%%%%%

	No funding was received.
	%%%%%%%%%%%%%%%%%%%%%%%%%%%%%%%%%%%%%%%%%%%%%%%%%%%%%%%%%%%%%%%%%%%%%%%%%%%%%%%%%%%%%%%%%%%%%%%%%%%%%%%%%%%%%%%%%%%%%%%%%%%%%%%%%%%%%%%%%%%%%%%%%%%%%%%%%%%%%%

	\medskip
	%AMS classifications available at https://mathscinet.ams.org/mathscinet/freetools/msc-search
	\noindent MSC2020: 11B39, 33C05
	
\end{document}